\theoremstyle{plain}
\newtheorem{theorem}{Theorem}
\newtheorem{proposition}[theorem]{Proposition}
\newtheorem{lemma}[theorem]{Lemma}
\theoremstyle{definition}
\theoremstyle{remark}
\newtheorem{remark}{Remark}
\begin{document}
	
	% authors
	\author{Phuong Le}
	\address{Phuong Le$^{1,2}$ (ORCID: 0000-0003-4724-7118)\newline
		$^1$Faculty of Economic Mathematics, University of Economics and Law, Ho Chi Minh City, Vietnam; \newline
		$^2$Vietnam National University, Ho Chi Minh City, Vietnam}
	\email{phuongl@uel.edu.vn}
	
	% classifications
	\subjclass[2020]{35J61, 35J75, 35B06, 35B09}
	\keywords{semilinear elliptic equation, singular nonlinearity, half-space, monotonicity, rigidity}
	
	% other infos
	%\date{March 13, 2021}
	%\dedicatory{Dedicated to our adviser}
	
	%	\thanks{%The author would like to thank anonymous reviewers for pointing out some errors in the first version of this paper.
		%		This research is funded by University of Economics and Law, VNU-HCM}
	
	% title & abstract
	\title[Singular semilinear elliptic equations]{Singular semilinear elliptic equations in half-spaces}
	\begin{abstract}
		We prove the monotonicity of positive solutions to the problem $-\Delta u = f(u)$ in $\mathbb{R}^N_+ := \{(x',x_N)\in\mathbb{R}^N \mid x_N>0 \}$ under zero Dirichlet boundary condition with a possible singular nonlinearity $f$. In some situations, we can derive a precise estimate on the blow-up rate of $\frac{\partial u}{\partial\eta}$ as $x_N \to 0^+$, where $(\eta,e_N)>0$, and obtain a classification result. The main tools we use are the method of moving planes and the sliding method.
	\end{abstract}
	
	\maketitle
	%\tableofcontents
	
	\section{Introduction}
	
	The monotonicity and symmetry of solutions to the semilinear elliptic problem
	\begin{equation}\label{main}
		\begin{cases}
			-\Delta u = f(u) &\text{ in } \mathbb{R}^N_+,\\
			u > 0 &\text{ in } \mathbb{R}^N_+,\\
			u = 0 &\text{ on } \partial\mathbb{R}^N_+,
		\end{cases}
	\end{equation}
	where
	\[
	\mathbb{R}^N_+ := \{x:=(x',x_N)\in\mathbb{R}^N \mid x_N>0 \},
	\]
	are well studied in the literature.	Berestycki, Caffarelli, and Nirenberg \cite{MR1395408,MR1655510} demonstrated that if \( f:[0,+\infty) \to \mathbb{R} \) is a Lipschitz function with \( f(0) \ge 0 \), then any classical solution of \eqref{main} is monotone in the \( x_N \)-direction. When \( f \) is only locally Lipschitz continuous on $[0,+\infty)$, a similar monotonicity result can be obtained for solutions that are bounded on all strips $\Sigma_\lambda := \{(x',x_N)\in\mathbb{R}^N \mid 0<x_N<\lambda \}$ ($\lambda>0$), as shown in \cite{MR4142367,MR2254600}. The case where \( f(0) < 0 \) is more complex, and a complete proof of monotonicity for solutions in this scenario is currently only available for dimension \( N = 2 \), as detailed in \cite{MR3593525,MR3641643}. For results on symmetry of solutions, which is usually called rigidity in the literature, we refer to \cite{MR1470317,MR1655510,MR794096,MR937538,MR1260436} and the references therein.
	
	In this paper, we are mainly interested in problem \eqref{main} with singular nonlinearity at zero in the sense that $f:(0,+\infty)$ is locally Lipschitz continuous and $\lim_{t\to0^+} f(t) = +\infty$. A model problem is given by
	\begin{equation}\label{singular}
		\begin{cases}
			-\Delta u = \dfrac{1}{u^\gamma} + g(u) &\text{ in } \mathbb{R}^N_+,\\
			u > 0 &\text{ in } \mathbb{R}^N_+,\\
			u = 0 &\text{ on } \partial\mathbb{R}^N_+,
		\end{cases}
	\end{equation}
	where $\gamma>0$ and $g:[0,+\infty)$ is a locally Lipschitz continuous function.
	It's well established that solutions to problem \eqref{singular} are generally not smooth up to the boundary. In fact, it was shown in \cite{MR1037213} and also in Theorem \ref{th:monotonicity2} below that the gradient of solutions becomes unbounded at the boundary. Given the natural regularity behavior of these solutions (as discussed in \cite{MR427826}), we focus on solutions \( u \in C^2(\mathbb{R}^N_+) \cap C(\overline{\mathbb{R}^N_+}) \) to \eqref{main}. Consequently, the equation is well-defined in the classical sense within the domain's interior.
	
	Since the seminal paper \cite{MR427826}, singular semilinear elliptic problems
	\begin{equation}\label{bounded}
		\begin{cases}
			-\Delta u = \dfrac{1}{u^\gamma} + g(u) &\text{ in } \Omega,\\
			u > 0 &\text{ in } \Omega,\\
			u = 0 &\text{ on } \Omega,
		\end{cases}
	\end{equation}
	where $\Omega\subset\mathbb{R}^N$ is a bounded domain,	have been extensively studied from various perspectives. We specifically reference the works \cite{MR2718666,MR2927112,MR3105927,MR2592976,MR2099611,MR1194224,MR1037213,MR3800107}, which are closely related to our research. A key focus in the study of these equations is understanding the behavior of solutions near the boundary, where they often lose regularity. A generalized version of the Höpf boundary lemma was obtained in \cite{MR3912757}. The symmetry of solutions was studied in \cite{MR3800107} (see also \cite{MR4333974,MR4658655} and the references therein).
	
	As demonstrated in \cite{MR3912757}, to obtain the Höpf boundary lemma for \eqref{bounded}, one may exploit a scaling argument near the boundary which leads to the study of a limiting problem in the half-space	
	\begin{equation}\label{pure}
		\begin{cases}
			-\Delta u = \dfrac{1}{u^\gamma} &\text{ in } \mathbb{R}^N_+,\\
			u > 0 &\text{ in } \mathbb{R}^N_+,\\
			u = 0 &\text{ on } \partial\mathbb{R}^N_+,
		\end{cases}
	\end{equation}
	which is exactly problem \eqref{main} with $f\equiv0$. Solutions to problem \eqref{pure} have been classified recently in elegant papers \cite{MR4753083,2024arXiv240403343M}. These results reveal that all weak solutions to \eqref{pure} with $\gamma>1$ must be either of the form	
	\[
	u(x) \equiv \frac{(\gamma+1)^\frac{2}{\gamma+1}}{(2\gamma-2)^\frac{1}{\gamma+1}}	x_N^\frac{2}{\gamma+1}
	\]
	or of the form
	\[
	u(x) \equiv \lambda^{-\frac{2}{\gamma+1}} v(\lambda x_N)
	\]
	where $\lambda>0$ and $v \in C^2(\mathbb{R}_+) \cap C(\overline{\mathbb{R}_+})$ is the unique solution to
	\[
	\begin{cases}
		-v'' = \dfrac{1}{v^\gamma}, & t>0,\\
		v(t)>0, & t>0,\\
		v(0)=0, ~ \lim_{t\to+\infty} v'(t) = 1.
	\end{cases}
	\]
	
	One notable feature of problem \eqref{pure} is that its nonlinearity is decreasing on $(0,+\infty)$. Hence the weak comparison principle holds in large subdomains of $\mathbb{R}^N_+$ and the monotonicity of solutions follows directly from this principle. In this paper, we consider a more general situation by studying the monotonicity and rigidity results for solutions \( u \in C^2(\mathbb{R}^N_+) \cap C(\overline{\mathbb{R}^N_+}) \) to \eqref{main} with a possible singular nonlinearity $f$. In particular, we address the issue that $f$ is not decreasing in the whole $(0,+\infty)$. The main assumption on $f$ we  require is the following:
	\begin{enumerate}[label=\textnormal{($F$)},ref=$F$]
		\item\label{F} for any $M>0$, there exists $C(M)>0$ such that
		\[
		f(s) - f(t)\le C(M)(s-t) \quad\text{ for all } 0<t\le s\le M.
		\]
	\end{enumerate}
	Our first result is the following monotonicity result, which holds in a very general setting. Indeed, we require only the behavior of $f$ near its possible singular point.
	\begin{theorem}\label{th:monotonicity}
		Assume that $f:(0,+\infty)$ is a locally Lipschitz continuous function satisfying \eqref{F} and there exist $c_0,t_0>0$ such that
		\[
		f(t) > c_0t \quad\text{ for all } 0<t<t_0.
		\]
		Let \( u \in C^2(\mathbb{R}^N_+) \cap C(\overline{\mathbb{R}^N_+}) \) be a solution to \eqref{main} with $u\in L^\infty(\Sigma_\lambda)$ for all $\lambda>0$.
		Then
		\[
		\frac{\partial u}{\partial x_N} > 0 \quad\text{ in } \mathbb{R}^N_+.
		\]
	\end{theorem}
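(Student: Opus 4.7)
The proof is by the moving planes method in the $x_N$-direction. For $\lambda > 0$, put $u_\lambda(x', x_N) := u(x', 2\lambda - x_N)$ and $w_\lambda := u - u_\lambda$ on the strip $\Sigma_\lambda$. On the positivity set $\{w_\lambda > 0\}$, hypothesis $(F)$ yields
\[
-\Delta w_\lambda \;=\; f(u) - f(u_\lambda) \;\le\; C(M_\lambda)\, w_\lambda, \qquad M_\lambda := \|u\|_{L^\infty(\Sigma_{2\lambda})},
\]
while on the boundary $w_\lambda(x',0) = -u(x',2\lambda) \le 0$ and $w_\lambda(x',\lambda) = 0$. The goal is to show $w_\lambda \le 0$ in $\Sigma_\lambda$ for every $\lambda > 0$; this immediately yields $u(x', x_N) \le u(x', x_N + t)$ for all $t > 0$, hence $\partial_{x_N} u \ge 0$.

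For the starting step (small $\lambda$), the singularity of $f$ at $0$ is not an obstruction, because condition $(F)$ is a one-sided Lipschitz bound that holds all the way down to the singularity and controls $f(u) - f(u_\lambda)$ precisely on the set $\{w_\lambda > 0\}$. Applying the maximum principle in narrow unbounded strips to the bounded nonnegative function $w_\lambda^+$ (which vanishes on $\partial \Sigma_\lambda$), for $\lambda$ small enough relative to $C(M_{\lambda_0})$ at a fixed reference $\lambda_0$, one obtains $w_\lambda^+ \equiv 0$.

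Set $\lambda^* := \sup\{\lambda > 0 \mid w_\mu \le 0 \text{ in } \Sigma_\mu \text{ for all } 0 < \mu < \lambda\} > 0$ and argue by contradiction that $\lambda^* < +\infty$. Continuity gives $w_{\lambda^*} \le 0$ in $\Sigma_{\lambda^*}$; the strong maximum principle (with the bounded zeroth-order potential supplied by $(F)$) forces either $w_{\lambda^*} \equiv 0$---ruled out by $u > 0$ because it would give $u(x',2\lambda^*) = u(x',0) = 0$---or $w_{\lambda^*} < 0$ in $\Sigma_{\lambda^*}$. In the second case, one shows the plane can be pushed slightly past $\lambda^*$ by splitting $\Sigma_{\lambda^*+\varepsilon}$ into a thin sub-strip near $\{x_N = 0\}$, handled by the narrow-strip MP as before, and a slab $\{\delta < x_N < \lambda^*+\varepsilon\}$ bounded away from the singular boundary, where $f$ is locally Lipschitz in the classical sense. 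Strict monotonicity $\partial_{x_N} u > 0$ then follows from the strong maximum principle applied to the linearization.

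The main technical obstacle is the unboundedness of $\Sigma_\lambda$ in the tangential direction, which disables the usual compactness argument for pushing past $\lambda^*$. I expect to handle this by exploiting translation invariance in $x'$: along a sequence $(x_n')$ with $|x_n'| \to \infty$ where $w_{\lambda^*+\varepsilon}$ would fail to be nonpositive, interior elliptic estimates extract a limit $\bar u$ that still solves \eqref{main}, and a contradiction is derived from the strict inequality $w_{\lambda^*} < 0$. The pointwise lower bound $f(t) > c_0 t$ for small $t$ is the crucial quantitative hypothesis here, preventing the limit from degenerating near the boundary and enabling a Dirichlet eigenvalue comparison in the spirit of the Berestycki--Caffarelli--Nirenberg arguments in half-spaces.
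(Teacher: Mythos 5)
Your overall strategy coincides with the paper's: start the moving planes with a weak comparison principle in narrow strips (valid thanks to the one-sided bound \eqref{F}), define the critical level, and push past it by combining a thin-strip estimate near $\{x_N=0\}$ with a translation-compactness argument in the complementary slab. However, there is a genuine gap at the crux of the argument, namely your treatment of the thin sub-strip when sliding past $\lambda^*$. You propose to handle $\Sigma_\delta$ ``by the narrow-strip MP as before,'' but that comparison principle requires the boundary datum $u\le u_\lambda$ on $\{x_N=\delta\}$ as an input, and for $\lambda>\lambda^*$ this is precisely part of what must be proved; the slab argument in $\{\delta<x_N<\lambda\}$ needs the same inequality on its bottom face, so the two halves of your splitting each presuppose the other. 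What is actually needed --- and what the paper supplies --- is a \emph{quantitative, $x'$-uniform} strict separation $u+\tilde\delta<u_\lambda$ in a fixed thin strip $\Sigma_{\tilde\lambda}$, valid for all $\lambda\ge\lambda^*$, obtained not from a comparison principle but from two a priori estimates: (i) $u(x',x_N)\to0$ uniformly in $x'$ as $x_N\to0^+$, which is \emph{not} automatic from $u\in C(\overline{\mathbb{R}^N_+})$ on an unbounded boundary and is proved via an ODE supersolution fed into the narrow-strip comparison principle; and (ii) the lower bound $u\ge\min\{Cx_N,t_0\}$, which is where $f(t)>c_0t$ enters through an eigenfunction sliding argument. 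You gesture at (ii), but (i) is absent from your outline, and without the resulting gap the compactness argument in the slab cannot exclude that the touching point of the limit profile lies on the bottom face.

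A second, smaller omission: in the slab, the contradiction cannot always be drawn from ``the strict inequality $w_{\lambda^*}<0$,'' because $w_{\lambda^*}$ vanishes identically on the top face $\{x_N=\lambda^*\}$, and the bad points $x_n$ for $\lambda_n\searrow\lambda^*$ may well accumulate there. In that case the limit satisfies $w_{\lambda^*}(\bar x)=0$ at a boundary point of the slab, which is consistent with strict negativity in the interior. The paper handles this case by a mean value theorem argument producing $\partial_{x_N}(u_{\lambda^*}-u)\ge0$ at the touching point, contradicting the H\"opf boundary point lemma for the linearized inequality $-\Delta(u_{\lambda^*}-u)+C(u_{\lambda^*}-u)\ge0$. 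Your sketch should be completed with both the uniform boundary separation and this H\"opf-lemma step before it constitutes a proof.
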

	Theorem \ref{th:monotonicity} applies not only to singular nonlinearities but also the superlinear ones.
	Since $f$ is not decreasing, we need to derive a weak comparison principle for the problem in narrow strips and exploit the moving plane method to prove Theorem \ref{th:monotonicity}. Similar results for singular problems in bounded domains were obtained in \cite{MR3800107,MR3912757}. Theorem \ref{th:monotonicity} can be applied to problem \eqref{singular} to yield the monotonicity of solutions. Furthermore, the inward derivatives of all such solutions must blow up near the boundary. Indeed, we can provide a precise estimate of the blow-up rate of derivatives as $x_N \to 0^+$ in our next result.
	\begin{theorem}\label{th:monotonicity2} Assume that $\gamma>1$ and $g:[0,+\infty)$ is a locally Lipschitz continuous function.
		Let \( u \in C^2(\mathbb{R}^N_+) \cap C(\overline{\mathbb{R}^N_+}) \) be a solution to \eqref{singular} with $u\in L^\infty(\Sigma_{\overline\lambda})$ for some $\overline\lambda>0$.
		Then
		\[
		\frac{\partial u}{\partial x_N} > 0 \quad\text{ in } \mathbb{R}^N_+.
		\]
		Moreover, for each $\beta\in(0,1)$, there exist $c_1,c_2,\lambda_0>0$ such that
		\begin{equation}\label{gradient_blowup}
			c_1 x_N^\frac{1-\gamma}{\gamma+1} < \frac{\partial u(x)}{\partial \eta} < c_2 x_N^\frac{1-\gamma}{\gamma+1} \quad\text{ in } \Sigma_{\lambda_0}
		\end{equation}
		for all $\eta\in \mathbb{S}^{N-1}_+$ with $(\eta,e_N) \ge \beta$, where $\mathbb{S}^{N-1}_+:=\mathbb{R}^N_+ \cap \partial B_1(0)$ and $e_N:=(0,\dots,0,1)$.
	\end{theorem}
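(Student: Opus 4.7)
The plan is to reduce to Theorem~\ref{th:monotonicity}. The nonlinearity $f(t) = t^{-\gamma} + g(t)$ is locally Lipschitz on $(0,+\infty)$; assumption \eqref{F} holds because $t^{-\gamma}$ is strictly decreasing so its contribution to $f(s)-f(t)$ is nonpositive for $0<t\le s\le M$, while $g$ provides the Lipschitz constant $C(M)$. The behavior $f(t)>c_0 t$ for small $t$ is immediate since $t^{-\gamma}\to+\infty$. These hypotheses match those of Theorem~\ref{th:monotonicity}, with the sole caveat that $u$ is assumed bounded only on $\Sigma_{\overline\lambda}$ rather than on every strip. I would therefore rerun the moving-plane argument of Theorem~\ref{th:monotonicity} restricted to $\Sigma_{\overline\lambda}$ to conclude $\partial_{x_N} u>0$ in $\Sigma_{\overline\lambda/2}$, and then extend the positivity to all of $\mathbb{R}^N_+$ by applying the strong maximum principle to the linearized equation $-\Delta w = f'(u)w$ satisfied by $w:=\partial_{x_N} u$ in $\mathbb{R}^N_+$ (well-defined since $u>0$ there), starting from its strict positivity on $\{x_N=\overline\lambda/2\}$.

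\textbf{A priori estimate and blow-up scaling.} The heart of the argument is a blow-up scaling at the boundary. First I would establish the matching two-sided estimate
\[
c_0\, x_N^{\frac{2}{\gamma+1}} \;\le\; u(x) \;\le\; C_0\, x_N^{\frac{2}{\gamma+1}} \qquad \text{in } \Sigma_{\lambda_0}
\]
for some $\lambda_0<\overline\lambda$, by comparing $u$ with barriers $v_K(x) = K\, x_N^{2/(\gamma+1)}$. A direct computation gives $-\Delta v_K = K\frac{2(\gamma-1)}{(\gamma+1)^2}\, x_N^{-2\gamma/(\gamma+1)}$; in a narrow strip the dominant singular term $v_K^{-\gamma}$ is of the same order $x_N^{-2\gamma/(\gamma+1)}$ while $g(v_K)$ is of lower order, so $v_K$ becomes a strict super- or subsolution of $-\Delta v = v^{-\gamma}+g(v)$ for $K$ large or small respectively. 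The narrow-strip weak comparison principle from Theorem~\ref{th:monotonicity} then delivers the sandwich. Next, after a translation in $x'$, introduce the rescaling $u_\rho(y) = \rho^{-2/(\gamma+1)} u(\rho y)$, which solves
\[
-\Delta u_\rho = u_\rho^{-\gamma} + \rho^{\frac{2\gamma}{\gamma+1}}\, g(u(\rho y)) \qquad \text{in } \mathbb{R}^N_+
\]
and inherits the bound $c_0 y_N^{2/(\gamma+1)} \le u_\rho \le C_0 y_N^{2/(\gamma+1)}$ uniformly in small $\rho$. Interior Schauder estimates, using the lower bound to tame the singular term, yield $C^{2,\alpha}_{\mathrm{loc}}$ compactness, so along a sequence $\rho_n\to 0^+$ we extract $u_{\rho_n}\to U$ in $C^{1,\alpha}_{\mathrm{loc}}$, with $U$ a classical solution of the pure half-space problem \eqref{pure}.

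\textbf{Classification and conclusion.} The upper bound $U(y) \le C_0 y_N^{2/(\gamma+1)}$ is incompatible with the asymptotically linear branch $\lambda^{-2/(\gamma+1)} v(\lambda y_N)$ in the classification from \cite{MR4753083,2024arXiv240403343M} (whose growth is linear in $y_N$, hence faster than $y_N^{2/(\gamma+1)}$ for $\gamma>1$), so necessarily $U(y) = c_*\, y_N^{2/(\gamma+1)}$ with $c_* = (\gamma+1)^{2/(\gamma+1)}(2\gamma-2)^{-1/(\gamma+1)}$. Uniqueness of the limit upgrades subsequential convergence to full convergence. The $C^{1,\alpha}_{\mathrm{loc}}$ convergence then gives $\partial_{y_N} u_\rho(y)\to c_*\frac{2}{\gamma+1}y_N^{(1-\gamma)/(\gamma+1)}$ and $\partial_{y_i} u_\rho(y)\to 0$ for $i<N$. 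Unscaling via $\partial_{x_i} u(\rho y) = \rho^{(1-\gamma)/(\gamma+1)} \partial_{y_i} u_\rho(y)$ and evaluating at $y_N=1$ yields $\partial_{x_N} u(x) = \bigl(c_*\frac{2}{\gamma+1} + o(1)\bigr)\, x_N^{(1-\gamma)/(\gamma+1)}$ with tangential derivatives of strictly lower order; since $\partial_\eta u = \eta_N \partial_{x_N} u + \eta'\cdot\nabla_{x'} u$ and $\eta_N\ge\beta>0$, the two-sided bound \eqref{gradient_blowup} follows uniformly over such $\eta$. I expect the hardest step to be the matching boundary estimate $u\asymp x_N^{2/(\gamma+1)}$, since the narrow-strip comparison must accommodate the non-monotone perturbation $g(u)$ with $\lambda_0$ chosen compatibly with $\|u\|_{L^\infty(\Sigma_{\overline\lambda})}$ and the Lipschitz constant of $g$; once this is in hand, the rescaling and classification pin down the limit cleanly.
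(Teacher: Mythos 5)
Your overall strategy for \eqref{gradient_blowup} — a two-sided boundary estimate $u\asymp x_N^{2/(\gamma+1)}$, blow-up rescaling, and classification of the limiting profile via \cite{MR4753083} — coincides with the paper's. Your identification of the blow-up limit as exactly the power solution (using the global bound $U\le C_0y_N^{2/(\gamma+1)}$ to exclude the linear-growth branch of the classification) is legitimate and in fact sharper than the paper, which never identifies the limit: for the upper bound it only uses the uniform $C^{2,\alpha}$ bounds on the rescalings, and for the lower bound it argues by contradiction using only that the limit is one-dimensional and strictly increasing. One caveat: the uniformity in $x'$ of your asymptotic $\partial_{x_N}u=(c_*\tfrac{2}{\gamma+1}+o(1))x_N^{(1-\gamma)/(\gamma+1)}$ is not delivered by "uniqueness of the limit" at a fixed base point; you still need the compactness argument along arbitrary sequences of base points $x_n$ with $x_{n,N}\to0$, which is exactly how the paper phrases it.

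There are, however, two genuine gaps. First, the lower half of your sandwich does not follow from the narrow-strip weak comparison principle as you invoke it: in Proposition \ref{prop:wcp} the function playing the role of the supersolution must be strictly positive on $\partial\mathbb{R}^N_+$, and for the inequality $Kx_N^{2/(\gamma+1)}\le u$ that role is played by $u$ itself, which vanishes there. You would need to shift the barrier downward, $v_{K,\varepsilon}=K(x_N-\varepsilon)_+^{2/(\gamma+1)}$, compare in $\{\varepsilon<x_N<\lambda_0\}$, and first secure a positive lower bound for $u$ on $\{x_N=\lambda_0\}$; the paper sidesteps all of this in Lemma \ref{lem:lower_bound} by sliding eigenfunction-based barriers supported on interior balls, which never touch $\partial\mathbb{R}^N_+$. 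Second, your extension of the monotonicity from $\Sigma_{\overline\lambda/2}$ to all of $\mathbb{R}^N_+$ is invalid: the strong maximum principle applied to $w=\partial_{x_N}u$ propagates a sign only inside a region where $w\ge0$ is already known, and strict positivity of $w$ on the hypersurface $\{x_N=\overline\lambda/2\}$ gives no information about its sign above that level. Monotonicity in the whole half-space requires running the moving plane for every $\lambda$, and the compactness step at level $\lambda$ uses control of $u$ on $\Sigma_{2\lambda}$ (the paper simply invokes Theorem \ref{th:monotonicity}, whose hypothesis is boundedness on every strip; bridging that hypothesis from boundedness on a single strip is a point that needs an argument, but your proposed shortcut does not supply one).
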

	
	In this paper, we also exploit the techniques and ideas from \cite{MR4753083} to establish one-dimensional symmetry of solutions to singular problems whose a model is problem \eqref{singular}, where $\gamma>1$ and $g:(0,+\infty)\to\mathbb{R}$ is a nonnegative locally Lipschitz continuous function such that $\limsup_{t\to+\infty} t^\gamma g(t) < +\infty$. Notice that $g$ may have a singularity at zero such as $g(t)=\frac{1}{t^\beta}$ with $\beta\ge\gamma$.
	\begin{theorem}\label{th:rigidity}
		Assume that $\gamma>1$ and $f:(0,+\infty)$ is a positive locally Lipschitz continuous function satisfying \eqref{F} and
		\begin{enumerate}
			\item[(i)] there exists $c_0>0$ such that
			\[
			f(t) > \frac{c_0}{t^\gamma} \quad\text{ for all } t>0,
			\]
			\item[(ii)] there exist $c_1,t_1>0$ such that $f$ is nonincreasing on $(t_1,+\infty)$ and
			\[
			f(t) < \frac{c_1}{t^\gamma} \quad\text{ for all } t>t_1.
			\]
		\end{enumerate}
		Let \( u \in C^2(\mathbb{R}^N_+) \cap C(\overline{\mathbb{R}^N_+}) \) be a solution to \eqref{main} with $u\in L^\infty(\Sigma_{\overline\lambda})$ for some $\overline\lambda>0$.
		Then $u(x)\equiv v(x_N)$, where $v$ is given by the formula
		\[
		\int_{0}^{v(t)} \frac{ds}{\sqrt{M+F(s)}} = \sqrt{2} t \quad\text{ for all } t\ge0
		\]
		for some $M\ge0$, where $F(s) = \int_{s}^{+\infty} f(t) dt$.
	\end{theorem}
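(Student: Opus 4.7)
The plan is to adapt the strategy of~\cite{MR4753083} for the pure singular problem to the broader class of nonlinearities admitted here. As a first step I would verify the hypotheses of Theorem~\ref{th:monotonicity}: assumption~(i) gives $f(t)>c_0 t^{-\gamma}$, which for $t$ in a small enough right-neighborhood of zero is stronger than any linear lower bound $f(t)>ct$; together with~(F), Theorem~\ref{th:monotonicity} applies, provided one knows $u\in L^\infty(\Sigma_\lambda)$ for every $\lambda>0$. I would upgrade the given bound on $\Sigma_{\bar\lambda}$ to all strips via a bootstrap using $\partial_{x_N}u>0$ and the fact, which follows from~(ii), that $f$ is bounded on $[t_1,+\infty)$. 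The outcome is $\partial_{x_N}u>0$ in $\mathbb{R}^N_+$.

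The heart of the proof is showing that $u$ depends only on $x_N$. I would introduce the $P$-function
\[
P(x) := \tfrac12 |\nabla u(x)|^2 - F(u(x)),
\]
which is constant and equal to~$M$ along any 1D profile solving the quadrature relation. A direct computation yields $\Delta P = |\nabla^2 u|^2 - f(u)^2$, which has no definite sign. Following the ideas of~\cite{MR4753083}, I would instead exploit the conservation law $\partial_j T_{ij}=0$ for the stress tensor $T_{ij}:=u_i u_j - \delta_{ij}\bigl(\tfrac12|\nabla u|^2+F(u)\bigr)$, integrate it over boxes $B'_R\times(\varepsilon,T)\subset\mathbb{R}^N_+$, and pass to the limits $\varepsilon\to 0^+$, $R\to+\infty$ and $T\to+\infty$. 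The boundary contributions on $\{x_N=\varepsilon\}$ are controlled via the singular blow-up of $\nabla u$ near $\{x_N=0\}$, in the spirit of Theorem~\ref{th:monotonicity2} but now driven by the ODE $(v')^2=2(M+F(v))$; the tangential contributions on $\{|x'|=R\}$ are killed using the $1/t^\gamma$ asymptotic decay of $f$ at infinity provided by~(i)--(ii). The conclusion is that $P$ is a constant $M\ge 0$ and $\nabla u$ is aligned with $e_N$, so $u(x)=v(x_N)$.

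Once one-dimensional symmetry is in hand, the reduction to the quadrature is elementary. Multiplying $-v''=f(v)$ by $v'$ and integrating yields $\tfrac12(v'(t))^2 = F(v(t))+M$ for some constant~$M$. Since $\gamma>1$ together with~(ii) forces $F(v)\to 0$ as $v\to+\infty$, the non-negativity of $(v')^2$ yields $M\ge 0$. Separating variables and integrating from $0$ to $t$ then produces the announced formula.

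The main obstacle is the rigidity argument in the second paragraph. In a half-space there is no a priori tangential decay of $u$ at infinity, so neither moving planes nor the standard sliding method applies directly in the $x'$-directions, and the $P$-function itself does not satisfy a one-sided differential inequality. The decisive inputs are the sharp gradient blow-up near $\{x_N=0\}$ together with the precise asymptotic behavior of $f$ at infinity given by~(i)--(ii); these two ingredients, one near the boundary and one at spatial infinity, are what allow the boundary terms in the integral identity to decay as the cylinder is enlarged.
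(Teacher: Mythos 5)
There is a genuine gap at the heart of your argument: the rigidity step in your second paragraph is not substantiated, and the paper proceeds quite differently. You propose to integrate the conservation law $\partial_j T_{ij}=0$ over boxes $B_R'\times(\varepsilon,T)$ and to kill the lateral boundary terms using the decay $f(t)\lesssim t^{-\gamma}$. But the entries of $T_{ij}$ involve $|\nabla u|^2$ and $F(u)$, and neither of these decays in the tangential directions: for the expected one-dimensional profiles one has $v'(t)\to\sqrt{2M}$, which may be a positive constant, so $|T_{ij}|$ is merely bounded at infinity. The lateral contributions are then of order $R^{N-2}T$ against horizontal contributions of order $R^{N-1}$, and no limit procedure in the order you describe makes the former negligible while retaining useful information from the latter. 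Moreover, even if you could show that $P=\tfrac12|\nabla u|^2-F(u)$ is constant, that alone does not force $\nabla u$ to be parallel to $e_N$ (constancy of $P$ is compatible a priori with profiles tilted in an oblique direction), and since, as you concede, $\Delta P$ has no sign, the Modica-type strong maximum principle route is unavailable. The paper instead proves one-dimensional symmetry by the sliding method: it establishes $u(x)\le u(x+\lambda\nu)$ for every $\nu\in\mathbb{S}^{N-1}_+$ and every $\lambda>0$, using a maximum principle in unbounded domains (Lemma \ref{lem:wmp} and Proposition \ref{prop:wcp2}, which exploit that $f$ is nonincreasing on $(t_1,+\infty)$ and that $u>t_1$ away from the boundary by Lemma \ref{lem:lower_bound}), together with the narrow-strip comparison principle near $\partial\mathbb{R}^N_+$ and a compactness/translation argument in the intermediate strip; letting $\nu$ tend to horizontal directions then gives $\partial u/\partial\zeta=0$ for all tangential $\zeta$. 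Your hypotheses (i)--(ii) are used exactly to make this sliding scheme work (lower bound $u\gtrsim x_N^{2/(\gamma+1)}$, linear upper bound $u\le Cx_N$ and the gradient bound needed so that $\sup(u-u_\lambda^\nu)<+\infty$), not to control Pohozaev-type boundary integrals.

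A secondary issue: your first paragraph is circular. You want to apply Theorem \ref{th:monotonicity}, which requires $u\in L^\infty(\Sigma_\lambda)$ for all $\lambda>0$, and you propose to obtain these bounds ``via a bootstrap using $\partial_{x_N}u>0$'' --- but that monotonicity is precisely the conclusion of the theorem you are trying to invoke. The paper obtains the global bound $u(x)\le Cx_N$ directly (Lemma \ref{lem:upper_bound_infty}) via a Harnack-inequality argument that uses (i) and (ii) but no monotonicity; in fact the paper's proof of Theorem \ref{th:rigidity} never invokes Theorem \ref{th:monotonicity} at all, since monotonicity in $x_N$ comes out of the sliding with $\nu=e_N$. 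Your final paragraph (the reduction to the quadrature formula and the sign of $M$) is correct and matches the paper.
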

	We will employ the sliding method, which was introduced by Berestycki and Nirenberg \cite{MR1159383}, to prove Theorem \ref{th:rigidity}.	
	We stress that in Theorem \ref{th:rigidity} we do not assume that $f$ is nonincreasing in the whole domain $(0,+\infty)$. If this condition is granted, then we can show that solutions depend only on $x_N$ without the assumption of their boundedness on strips. This in turn yields a classification result. The proof for the following result is similar to the one in \cite{2024arXiv240403343M}.
	\begin{theorem}\label{th:rigidity2}
		Assume that $f:(0,+\infty)$ is a nonincreasing positive locally Lipschitz continuous function.
		Let \( u \in C^2(\mathbb{R}^N_+) \cap C(\overline{\mathbb{R}^N_+}) \) be a solution to \eqref{main}.
		Then $u$ depends only on $x_N$. Consequently, such a solution exists if and only if $\int_1^{+\infty} f(t) dt<+\infty$. Moreover, when such a solution exists, it is given by $u(x)\equiv v(x_N)$, where $v$ is determined by the formula
		\[
		\int_{0}^{v(t)} \frac{ds}{\sqrt{M+F(s)}} = \sqrt{2} t \quad\text{ for all } t\ge0
		\]
		for some $M\ge0$, where $F(s) = \int_{s}^{+\infty} f(t) dt$.
	\end{theorem}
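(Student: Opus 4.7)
The plan is to show first that any solution $u$ of \eqref{main} depends only on $x_N$, and then to extract the explicit formula for $v$ and the existence criterion $\int_1^{+\infty} f(t)\,dt < +\infty$ from a direct analysis of the resulting one-dimensional ODE. Since no a priori bound on $u$ in strips is available, every comparison step must use crucially the nonincreasing nature of $f$; that is exactly what allows one to drop the hypothesis $u \in L^\infty(\Sigma_{\overline\lambda})$ present in Theorem \ref{th:rigidity}. The overall scheme is the one employed in \cite{2024arXiv240403343M}.

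I would prove the one-dimensional symmetry in two steps. First, monotonicity $\partial u/\partial x_N > 0$ via the moving-plane method in the $x_N$-direction: for $\lambda>0$ set $u_\lambda(x',x_N):=u(x',2\lambda-x_N)$ and $w_\lambda := u - u_\lambda$ on $\Sigma_\lambda$; on $\{w_\lambda>0\}$ the monotonicity of $f$ gives $-\Delta w_\lambda = f(u)-f(u_\lambda) \le 0$, and combined with $w_\lambda \le 0$ on $\partial\Sigma_\lambda$ the weak comparison principle for narrow strips yields $w_\lambda \le 0$, which gets propagated in $\lambda$ by the usual moving-plane continuation. Second, I would slide in each horizontal direction: for $i<N$ and $\tau\in\mathbb{R}$, putting $u_{i,\tau}(x):=u(x+\tau e_i)$, the function $w := u - u_{i,\tau}$ satisfies $-\Delta w = f(u)-f(u_{i,\tau})$ in $\mathbb{R}^N_+$ with $w=0$ on $\partial\mathbb{R}^N_+$; the nonincreasing property of $f$ makes $w^+$ (and by symmetry $w^-$) subharmonic in $\mathbb{R}^N_+$ with vanishing trace on $\partial\mathbb{R}^N_+$, and a half-space Liouville / Phragm\'en--Lindel\"of argument, adapted from \cite{2024arXiv240403343M}, forces $w\equiv 0$. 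As $\tau$ and $i$ are arbitrary, $u$ is independent of $x'$ and we may write $u(x)=v(x_N)$.

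Once $u(x)=v(x_N)$, the function $v \in C^2((0,+\infty))\cap C([0,+\infty))$ satisfies $-v''=f(v)$, $v(0)=0$, $v>0$ and is strictly concave because $f>0$. If $v'$ vanished at some $t_0>0$, strict concavity would give $v'<0$ for $t>t_0$, and the inequality $v''\le -f(v(t_0))<0$ (valid as long as $v\le v(t_0)$) would force $v$ to hit zero in finite time, contradicting positivity. Hence $v'>0$ throughout, $v$ is strictly increasing to some $L\in(0,+\infty]$, and $v' \searrow \ell \ge 0$. The case $L<+\infty$ is excluded because it would yield $v''\to -f(L)<0$, driving $v'$ to $-\infty$. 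Thus $L=+\infty$; multiplying the ODE by $v'$ and integrating from $t$ to $+\infty$ produces the first integral $\tfrac12(v'(t))^2 = F(v(t)) + M$ with $M:=\ell^2/2 \ge 0$, where $F(s)=\int_s^{+\infty}f$. In particular $F(s)<+\infty$ for every $s>0$, equivalently $\int_1^{+\infty}f(t)\,dt<+\infty$, and separating variables delivers the stated integral formula for $v$. Conversely, under this integrability and for any $M\ge 0$ the same formula explicitly defines a strictly increasing $v:[0,+\infty)\to[0,+\infty)$ with $v(0)=0$ solving the ODE, and then $u(x):=v(x_N)$ solves \eqref{main}.

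The main obstacle is the horizontal-sliding step: without $u \in L^\infty(\Sigma_{\overline\lambda})$ the comparison of $u$ with its horizontal translate $u_{i,\tau}$ must be performed in an unbounded half-space, where the weak comparison principle in narrow strips does not apply directly. The whole argument therefore hinges on a Liouville-type result for nonnegative subharmonic functions in $\mathbb{R}^N_+$ with vanishing boundary trace, and adapting the treatment of \cite{2024arXiv240403343M} from the pure-power case $f(u)=u^{-\gamma}$ to a general nonincreasing $f$ is where the technical work of the proof concentrates.
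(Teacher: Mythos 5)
Your ODE analysis in the second half is correct and complete (indeed slightly more careful than the paper's own, which contains a small slip about the monotonicity of $v'$): the argument that $v'>0$, that $v\to+\infty$, the first integral $\tfrac12(v')^2=M+F(v)$ forcing $\int_1^{+\infty}f<+\infty$, and the converse construction are all fine. The gap is in the reduction to one dimension, and it is exactly at the point you flag but do not resolve. Your vertical moving-plane step invokes a ``weak comparison principle for narrow strips'' for $w_\lambda=u-u_\lambda$, but with no hypothesis $u\in L^\infty(\Sigma_{\overline\lambda})$ the function $w_\lambda$ has no growth control in $x'$, and the maximum principle in a strip fails without such control: $e^{\pi x_1/\lambda}\sin(\pi x_N/\lambda)$ is harmonic, positive in $\Sigma_\lambda$ and vanishes on $\partial\Sigma_\lambda$. (The paper's Proposition \ref{prop:wcp} also uses the bound $\|u\|_{L^\infty(\Sigma_{\overline\lambda})}$ essentially, in the estimate $h(R)\le C_\lambda R^{N-1}$ that closes the cut-off iteration.) Your horizontal sliding step is in worse shape: the ``Liouville-type result for nonnegative subharmonic functions in $\mathbb{R}^N_+$ with vanishing boundary trace'' that your argument hinges on is false as stated --- $w(x)=x_N$ is a counterexample --- so some substitute for boundedness is indispensable, not merely ``technical work.''

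The missing idea, which is the whole point of the paper's proof (following \cite{2024arXiv240403343M}), is the Kelvin transform $\hat u(x)=|x|^{2-N}u(x/|x|^2)$. The transformed function solves $-\Delta\hat u=|x|^{-(N+2)}f(|x|^{N-2}\hat u)$ and decays at infinity, and this decay replaces the unavailable $L^\infty$ bound in the cut-off/test-function estimates; moving planes in a \emph{horizontal} direction for $\hat u$ (using that $|x|\ge|x_\lambda|$ in $\Sigma_\lambda$ together with the monotonicity of $f$ to get the good sign) yields directly that $u$ is independent of $x'$. Only after that does the paper obtain $v'>0$, by applying Theorem \ref{th:monotonicity} to $v(x_N)$, which is automatically bounded on strips. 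So the order of the two steps is also reversed relative to your plan, and the first step cannot be carried out in the half-space coordinates you propose.
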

	
	The rest of this paper is devoted to the proofs of our main results. In Section \ref{sect2} we prove a comparison principle for narrow strips and derive some a priori estimates for solutions. In Section \ref{sect3}, we prove the monotonicity and rigidity of solutions by means of the moving plane and sliding methods.
	
	\section{Preliminaries}\label{sect2}
	
	\subsection{Weak comparison principle for narrow strips}
	We prove a weak comparison principle which can be applied to problems with singular nonlinearities.
	\begin{proposition}\label{prop:wcp}
		Assume that $f:(0,+\infty)$ is a locally Lipschitz continuous function such that \eqref{F} holds and \( u \in C^2(\mathbb{R}^N_+) \cap C(\overline{\mathbb{R}^N_+}) \) satisfies
		\[
		\begin{cases}
			-\Delta u \le f(u), ~ u > 0 &\text{ in } \mathbb{R}^N_+,\\
			u = 0 &\text{ on } \partial\mathbb{R}^N_+,\\
			u \in L^\infty(\Sigma_{\overline\lambda}) &\text{ for some } \overline\lambda>0.
		\end{cases}
		\]
		Then there exists a small positive number $\lambda^*=\lambda^*(f, \|u\|_{L^\infty(\Sigma_{\overline\lambda})})<\overline{\lambda}$ such that: if $0<\lambda\le\lambda^*$ and \( v \in C^2(\mathbb{R}^N_+) \cap C(\overline{\mathbb{R}^N_+}) \) satisfies
		\[
		\begin{cases}
			-\Delta v \ge f(v), ~ v > 0 & \text{ in } \Sigma_\lambda,\\			
			v > 0 & \text{ on } \partial\mathbb{R}^N_+,\\
			u \le v & \text{ on } \{x_N=\lambda\},\\
		\end{cases}
		\]
		then $u\le v$ in $\Sigma_\lambda$.
	\end{proposition}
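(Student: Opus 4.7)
The plan is to convert the inequality $-\Delta(u-v)\le f(u)-f(v)$ into a linear one for the positive part $w:=(u-v)^+$ using assumption \eqref{F}, and then deduce the comparison $w\equiv 0$ from a Caccioppoli estimate combined with the one-dimensional Poincaré inequality in the narrow direction. First I set $M:=\|u\|_{L^\infty(\Sigma_{\overline\lambda})}$ and observe that $w$ is continuous, bounded by $M$, and vanishes on $\partial\Sigma_\lambda$: on $\{x_N=0\}$ because $u=0<v$ there, and on $\{x_N=\lambda\}$ by hypothesis. On the set $\{u>v\}\cap\Sigma_\lambda$ both $u$ and $v$ lie in $(0,M]$ with $v<u$, so \eqref{F} yields the pointwise linear bound $f(u)-f(v)\le C(M)\,w$.

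Next, for a standard radial cutoff $\psi_R=\psi_R(x')$ with $\psi_R\equiv 1$ on $|x'|\le R$, $\psi_R\equiv 0$ on $|x'|\ge 2R$, and $|\nabla\psi_R|\le 2/R$, I test the pointwise inequality against $w\psi_R^2\ge 0$ and integrate on $\Sigma_\lambda$. Integration by parts produces no boundary contributions (since $w$ vanishes on $\partial\Sigma_\lambda$ and $\psi_R$ has compact support in $x'$), while $\nabla w$ vanishes a.e.\ on $\{u\le v\}$; applying Cauchy--Schwarz to the resulting cross term gives
\[
\tfrac12\int_{\Sigma_\lambda}|\nabla w|^2\psi_R^2\,dx\le C(M)\int_{\Sigma_\lambda}w^2\psi_R^2\,dx+2\int_{\Sigma_\lambda}w^2|\nabla\psi_R|^2\,dx.
\]
Because $w$ vanishes at both $x_N=0$ and $x_N=\lambda$, the one-dimensional Dirichlet Poincaré inequality yields $\int_{\Sigma_\lambda}w^2\psi_R^2\,dx\le(\lambda^2/\pi^2)\int_{\Sigma_\lambda}|\nabla w|^2\psi_R^2\,dx$. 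Inserting this gives
\[
\Bigl(\tfrac12-\tfrac{C(M)\lambda^2}{\pi^2}\Bigr)\int_{\Sigma_\lambda}|\nabla w|^2\psi_R^2\,dx\le 2\int_{\Sigma_\lambda}w^2|\nabla\psi_R|^2\,dx,
\]
which motivates choosing $\lambda^*:=\min\{\pi/(2\sqrt{C(M)}),\,\overline\lambda/2\}$, so that for $\lambda\le\lambda^*$ the coefficient on the left is bounded below by a positive absolute constant independent of $R$.

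The main technical obstacle is passing to the limit $R\to+\infty$. Since $w$ is only known to be bounded and $\Sigma_\lambda$ is unbounded in the $x'$-directions, a crude estimate gives only $\int w^2|\nabla\psi_R|^2\,dx=O(R^{N-3})$, which fails to decay when $N\ge 3$. I plan to overcome this by a bootstrap: the displayed bound first furnishes a Caccioppoli-type estimate which, combined with Poincaré, yields $w\in L^2(\Sigma_\lambda)$ in low dimensions, after which the tail $\int_{R\le|x'|\le 2R}w^2\,dx$ vanishes as $R\to+\infty$ by absolute continuity; in higher dimensions the same program goes through after replacing $\psi_R$ by a weight with better decay (for instance $(1+|x'|/R)^{-\alpha}$, satisfying $|\nabla\psi_R|\le(\alpha/R)\psi_R$, or a nested logarithmic cutoff), allowing the tail term to be absorbed into the left-hand side for $R$ large. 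In either case one concludes $\nabla w\equiv 0$ in $\Sigma_\lambda$, and the boundary condition $w=0$ on $\partial\Sigma_\lambda$ then forces $w\equiv 0$, i.e., $u\le v$ throughout $\Sigma_\lambda$.
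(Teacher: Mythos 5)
Your argument coincides with the paper's up to the key inequality
\[
\Bigl(\tfrac12-\tfrac{C(M)\lambda^2}{\pi^2}\Bigr)\int_{\Sigma_\lambda}|\nabla w|^2\psi_R^2\,dx\le 2\int_{\Sigma_\lambda}w^2|\nabla\psi_R|^2\,dx,
\]
and everything up to that point (choice of test function, use of \eqref{F} on the set $\{u>v\}$, the one-dimensional Poincar\'e inequality in $x_N$, the smallness condition on $\lambda$) is correct. The problem is the step you yourself flag as the main obstacle: for $N\ge 4$ your argument is not completed, and the fix you sketch does not work as stated. With a non-compactly-supported weight such as $(1+|x'|/R)^{-\alpha}$, the test function $w\psi_R^2$ is no longer admissible without further justification: $u$ and $v$ are only $C^2$ in the open half-space, so $\nabla w$ has no global bound and neither the finiteness of $\int_{\Sigma_\lambda}|\nabla w|^2\psi_R^2$ nor the integration by parts over the unbounded strip is available a priori. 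Moreover, bounding the right-hand side crudely by $(\alpha/R)^2\|w\|_\infty^2\|\psi_R^2\|_{L^1}$ still produces $O(R^{N-3})$, since $\|\psi_R^2\|_{L^1(\mathbb{R}^{N-1})}\sim R^{N-1}$; to absorb the tail one must reuse Poincar\'e on $\int w^2\psi_R^2$, which again presupposes that the weighted Dirichlet energy is finite. So the scheme is circular unless you first establish a quantitative local energy bound, and you have not done so.

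The missing idea is the doubling--iteration argument that the paper uses to close the estimate in every dimension. Keep the compactly supported cutoff, but instead of trying to kill the gradient term, use Poincar\'e to convert the inequality into a pure $L^2$ doubling inequality: dropping the (nonnegative) leftover gradient term one gets
\[
\int_{\Sigma_\lambda}(w^+)^2\varphi_R^2\le \int_{\Sigma_\lambda}|\nabla\varphi_R|^2(w^+)^2,
\]
hence $h(R)\le\frac{4}{R^2}h(2R)$ for $h(R):=\int_{B_R'\times(0,\lambda)}(w^+)^2$. Since $w^+$ is bounded, one also has the trivial growth bound $h(R)\le C_\lambda R^{N-1}$. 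Iterating, $h(R)\le 2^{-Nk}h(2^kR)\le C_\lambda 2^{-k}R^{N-1}\to 0$, so $h\equiv 0$ and $u\le v$ in $\Sigma_\lambda$, with no dimensional restriction and no need for weighted cutoffs. I recommend replacing your final paragraph with this iteration; the rest of your proof can stand as written.
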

	
	\begin{proof}
		In what follows, we consider $\lambda<\overline\lambda$.
		For $R>0$, let $\varphi_R \in C^\infty(\mathbb{R}^{N-1})$ be such that
		\begin{equation}\label{varphi}
			\begin{cases}
				0 \le \varphi_R \le 1 &\text{ in } \mathbb{R}^{N-1},\\
				\varphi_R = 1 &\text{ in } B'_R,\\
				\varphi_R = 0 &\text{ in } \mathbb{R}^{N-1} \setminus B'_{2R},\\
				|\nabla\varphi_R| \le \frac{2}{R} &\text{ in } B'_{2R} \setminus B'_R,
			\end{cases}
		\end{equation}
		where $B_r'$ denotes the ball in $\mathbb{R}^{N-1}$ of center $0'\in\mathbb{R}^{N-1}$ with radius $r$. We set
		\[
		\varphi(x) = w^+(x) \varphi_R^2(x') \chi_{\Sigma_\lambda}(x),
		\]
		where $w^+ := \max\{u - v, 0\}$.
		Since the support of $\varphi$ is compactly contained in $\Sigma_\lambda\cup\{x_N=\lambda\}$, we can use it as a test function in $-\Delta u \le f(u)$ and $-\Delta v \ge f(v)$. Then subtracting, we obtain
		\begin{align*}
			\int_{\Sigma_\lambda} |\nabla w^+|^2 \varphi_R^2 &\le -2\int_{\Sigma_\lambda} (\nabla w^+, \nabla \varphi_R) w^+ \varphi_R + \int_{\Sigma_\lambda} (f(u)-f(v)) w^+ \varphi_R^2\\
			&\le 2\int_{\Sigma_\lambda} |\nabla w^+| |\nabla \varphi_R| w^+ \varphi_R + \int_{\Sigma_\lambda} (f(u)-f(v)) w^+ \varphi_R^2.		
		\end{align*}
		In the set $\Sigma_\lambda\cap\{w^+>0\}$ we have
		\[
		0 < v < u \le \|u\|_{L^\infty(\Sigma_{\overline\lambda})}.
		\]
		Hence by exploiting Young's inequality and using \eqref{F}, we have
		\begin{align*}
			\int_{\Sigma_\lambda} |\nabla w^+|^2 \varphi_R^2 &\le \frac{1}{2} \int_{\Sigma_\lambda} |\nabla w^+|^2 \varphi_R^2 + 2\int_{\Sigma_\lambda} |\nabla \varphi_R|^2 (w^+)^2\\
			&\qquad + C(\|u\|_{L^\infty(\Sigma_{\overline\lambda})}) \int_{\Sigma_\lambda} (w^+)^2 \varphi_R^2.
		\end{align*}
		This is equivalent to
		\begin{equation}\label{wcp1}
			\int_{\Sigma_\lambda} |\nabla w^+|^2 \varphi_R^2 \le 4 \int_{\Sigma_\lambda} |\nabla \varphi_R|^2 (w^+)^2 + 2C(\|u\|_{L^\infty(\Sigma_{\overline\lambda})}) \int_{\Sigma_\lambda} (w^+)^2 \varphi_R^2.
		\end{equation}
		
		By the classical Poincar\'e inequality in the interval $(0,\lambda)$, we have
		\begin{align*}
			\int_{\Sigma_\lambda} |\nabla w^+|^2 \varphi_R^2 &\ge \int_{\mathbb{R}^{N-1}} \left(\int_{0}^{\lambda} \left(\frac{\partial w^+}{\partial x_N}\right)^2 dx_N\right) \varphi_R^2(x') dx'\\
			&\ge \frac{\pi^2}{\lambda^2} \int_{\mathbb{R}^{N-1}} \left(\int_{0}^{\lambda} (w^+)^2 dx_N\right) \varphi_R^2(x') dx'\\
			&= \frac{\pi^2}{\lambda^2} \int_{\Sigma_\lambda} (w^+)^2 \varphi_R^2.
		\end{align*}
		
		Therefore, \eqref{wcp1} leads to
		\[
		\left(\frac{\pi^2}{\lambda^2} - 2C(\|u\|_{L^\infty(\Sigma_{\overline\lambda})})\right) \int_{\Sigma_\lambda} (w^+)^2 \varphi_R^2 \le 4 \int_{\Sigma_\lambda} |\nabla \varphi_R|^2 (w^+)^2.
		\]
		Choosing $\lambda_0=\pi[4+2C(\|u\|_{L^\infty(\Sigma_{\overline\lambda})})]^{-\frac{1}{2}}$, then for all $\lambda<\min\{\lambda_0, \overline\lambda\}$ we have
		\[
		\int_{\Sigma_\lambda} (w^+)^2 \varphi_R^2 \le \int_{\Sigma_\lambda} |\nabla \varphi_R|^2 (w^+)^2.
		\]
		
		Using \eqref{varphi}, we deduce
		\[
		\int_{\Sigma_\lambda^R} (w^+)^2 \le \frac{4}{R^2}\int_{\Sigma_\lambda^{2R}} (w^+)^2,
		\]
		where $\Sigma_\lambda^R := B'_R \times (0,\lambda)$. Setting $h(R):=\int_{\Sigma_\lambda^R} (w^+)^2$, we have
		\[
		h(R)\le\frac{4}{R^2}h(2R) \quad\text{ and }\quad h(R) \le C_\lambda R^{N-1} \quad\text{ for all } R>0.
		\]
		Hence $h(R)\le\frac{1}{2^N}h(2R)$ for all $R>2^\frac{N+2}{2}$. By iteration of this inequality, we obtain
		\[
		h(R)\le\frac{1}{2^{Nk}}h(2^kR)\le\frac{C_\lambda}{2^{Nk}}(2^kR)^{N-1}=\frac{C_\lambda}{2^k}R^{N-1}
		\]
		for all $k\in\mathbb{N}$ and $R>2^\frac{N+2}{2}$. Letting $k\to\infty$, we deduce $h(R)\equiv0$.		
		This implies $\int_{\Sigma_\lambda} (w^+)^2=0$, which means $u\le v$ in $\Sigma_\lambda$ for all $\lambda<\lambda^*:=\min\{\lambda_0, \overline\lambda\}$.
	\end{proof}
	
	\subsection{A priori estimates}
	We need the following property on solutions so that we can carry out the moving plane method to prove Theorem \ref{th:monotonicity}.
	\begin{lemma}\label{lem:uniform_limit}
		Assume that $f:(0,+\infty)$ is a locally Lipschitz continuous function such that \eqref{F} holds. Let \( u \in C^2(\mathbb{R}^N_+) \cap C(\overline{\mathbb{R}^N_+}) \) be a solution to \eqref{main} with $u\in L^\infty(\Sigma_{\overline\lambda})$ for some $\overline\lambda>0$. Then
		\[
		\lim_{x_N\to0^+} u(x',x_N) = 0 \text{ uniformly in } x'\in\mathbb{R}^{N-1}_+.
		\]
	\end{lemma}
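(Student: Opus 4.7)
The plan is to build, for each target level $\epsilon > 0$, a one-dimensional super-solution on a narrow strip that dominates $u$ and equals $\epsilon$ at $\{x_N = 0\}$, and then invoke Proposition \ref{prop:wcp} to compare.

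Set $M := \|u\|_{L^\infty(\Sigma_{\overline\lambda})}$ and let $\lambda^* = \lambda^*(f, M) < \overline\lambda$ be the constant provided by Proposition \ref{prop:wcp}. Given $\epsilon \in (0, M)$, put
\[
C_\epsilon := \sup_{t \in [\epsilon,\, M+1]} f(t),
\]
which is finite since $f$ is locally Lipschitz, hence continuous, on the compact set $[\epsilon, M+1] \subset (0, +\infty)$. I then choose $\lambda \in (0, \lambda^*)$ small enough that $C_\epsilon \lambda^2 < 2(M + 1 - \epsilon)$ and define the one-dimensional barrier
\[
v_\epsilon(x_N) := \epsilon + A\, x_N - \frac{C_\epsilon}{2}\, x_N^2, \qquad A := \frac{M+1-\epsilon}{\lambda} + \frac{C_\epsilon \lambda}{2}.
\]
By construction $v_\epsilon(0) = \epsilon$ and $v_\epsilon(\lambda) = M + 1$; the smallness condition on $\lambda$ yields $v_\epsilon'(x_N) = A - C_\epsilon x_N \ge 0$ on $[0, \lambda]$, so $v_\epsilon$ is nondecreasing there and takes values in $[\epsilon, M+1]$ throughout $\overline{\Sigma_\lambda}$.

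By the definition of $C_\epsilon$ we therefore get $-\Delta v_\epsilon = -v_\epsilon'' = C_\epsilon \ge f(v_\epsilon)$ in $\Sigma_\lambda$; moreover $v_\epsilon \ge \epsilon > 0$ on $\overline{\Sigma_\lambda}$ and $v_\epsilon(\lambda) = M + 1 \ge u$ on $\{x_N = \lambda\}$. All hypotheses of Proposition \ref{prop:wcp} being satisfied, I conclude $u \le v_\epsilon$ in $\Sigma_\lambda$, and hence
\[
\sup_{x'\in\mathbb{R}^{N-1}} u(x', x_N) \;\le\; \epsilon + A\, x_N \quad\text{for all } x_N \in (0, \lambda).
\]
Letting $x_N \to 0^+$ gives $\limsup_{x_N\to 0^+} \sup_{x'} u(x', x_N) \le \epsilon$, and since $\epsilon > 0$ is arbitrary the uniform convergence follows.

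The only delicate point is the interplay between the constants: $\lambda$ must simultaneously lie below $\lambda^*$ (which is pinned down by $f$ and $M$ only) and below the quadratic threshold that keeps $v_\epsilon$ confined to $[\epsilon, M+1]$, where $C_\epsilon$ effectively dominates $f(v_\epsilon)$. In the singular regime $C_\epsilon$ blows up as $\epsilon \to 0^+$, forcing the usable strip width to shrink with $\epsilon$; this is harmless because the uniformity required is in $x'$ for each fixed $\epsilon$, not joint uniformity as $\epsilon \to 0$.
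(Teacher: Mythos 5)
Your argument is correct, and it rests on the same two pillars as the paper's proof --- a one-dimensional supersolution on a narrow strip, compared with $u$ via Proposition \ref{prop:wcp} --- but the barrier itself is built quite differently. The paper constructs a single supersolution $v_\mu(x_N)$ that actually vanishes at $x_N=0$: it solves an auxiliary ODE $-v''=h(v)$, $v(0)=0$, where $h$ is chosen to dominate $\max\{f,0\}$ below the level $\rho>\|u\|_{L^\infty(\Sigma_{\lambda^*})}$ and to have an integrable tail, so that a first-integral formula produces a globally defined increasing solution; the conclusion there is the pointwise majorization $u\le v_\mu(x_N)$ in a fixed strip, from which the uniform limit is immediate. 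You instead take, for each level $\varepsilon>0$, the explicit parabola $v_\varepsilon=\varepsilon+Ax_N-\tfrac{C_\varepsilon}{2}x_N^2$ with $v_\varepsilon(0)=\varepsilon>0$, which is a supersolution simply because its values stay in $[\varepsilon,M+1]$ where $f\le C_\varepsilon$, and you recover the uniform limit by sending $\varepsilon\to0$ only at the very end. Your route is more elementary (no ODE solvability or convergence-of-integrals discussion) at the price of producing no explicit majorant of $u$ vanishing at the boundary; since the lemma asks only for the qualitative limit, that is no loss (the quantitative bound is supplied separately by Lemma \ref{lem:upper_bound} under stronger hypotheses). Two small points of hygiene: the lemma does not assume $f\ge0$, so if $f<0$ on all of $[\varepsilon,M+1]$ your smallness condition should be $|C_\varepsilon|\lambda^2<2(M+1-\varepsilon)$ (or simply replace $C_\varepsilon$ by $\max\{C_\varepsilon,0\}$) to guarantee $v_\varepsilon'\ge0$, hence $v_\varepsilon\le M+1$, on $[0,\lambda]$; and the final inequality $u\le\varepsilon+Ax_N$ likewise uses $C_\varepsilon\ge0$, although $u\le v_\varepsilon$ together with $v_\varepsilon(x_N)\to\varepsilon$ as $x_N\to0^+$, uniformly in $x'$, already gives what you need.
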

	
	\begin{proof}
		Let $\lambda^*<\overline{\lambda}$ be defined as in Proposition \ref{prop:wcp} and choose some $\rho>\|u\|_{L^\infty(\Sigma_{\lambda^*})}$.
		Let $h:(0,+\infty) \to \mathbb{R}$ be a $C^1$ function such that
		\begin{align*}
			h(t) > \max\{f(t), 0\} &\quad\text{ in } (0,\rho),\\
			h(t) = \frac{c}{t^2} &\quad\text{ in } [\rho,+\infty)
		\end{align*}
		for some $c>0$.
		We set $H(t) = \int_{\rho}^{t} h(s) ds$ for $t>0$, then $H$ is strictly increasing in $(0,+\infty)$ and $H(t) < \int_{\rho}^{+\infty} h(s) ds = \frac{c}{\rho}$. For each $\mu\ge\frac{c}{\rho}$, one can check that
		\[
		\int_{0}^{+\infty} \frac{ds}{\sqrt{\mu-H(s)}} > \int_{\rho}^{+\infty} \frac{ds}{\sqrt{\mu}} = +\infty
		\]
		and
		\[
		\int_{0}^{t} \frac{ds}{\sqrt{\mu-H(s)}} < \frac{t}{\sqrt{\mu-H(t)}} < +\infty \quad\text{ for } 0<t<+\infty.
		\]
		Hence the formula
		\[
		\int_{0}^{v_\mu(t)} \frac{ds}{\sqrt{\mu-H(s)}} = \sqrt{2} t \quad\text{ for all } t\ge0
		\]
		uniquely determine a function $v_\mu \in C^2(\mathbb{R}_+) \cap C(\overline{\mathbb{R}_+})$, which is a solution to the ODE problem
		\[
		\begin{cases}
			-v'' = h(v) &\text{ in } \mathbb{R}_+,\\
			v'(t) > 0 &\text{ in } \mathbb{R}_+,\\
			v(0) = 0.
		\end{cases}
		\]
		Moreover, $\lim_{\mu\to+\infty} v_\mu(t) = +\infty$ for all $t>0$.
		
		We fix some $\mu>0$ such that $v_\mu(\lambda^*) > \rho$. Then we choose $\lambda_0<\lambda^*$ satisfying $\|u\|_{L^\infty(\Sigma_{\lambda^*})} < v_\mu(\lambda_0) < \rho$. By abuse of notation, we will write $v_\mu(x',x_N):=v_\mu(x_N)$. Then $0<v_\mu<\rho$ in $\Sigma_{\lambda_0}$ and $u < v_\mu$ on $\{x_N=\lambda_0\}$.
		
		For small $\varepsilon>0$ such that $v_\mu(\lambda_0+\varepsilon)<\rho$, we define
		\[
		v_{\mu,\varepsilon}(x) := v_\mu(x + \varepsilon e_N),
		\]
		Then
		\[
		\begin{cases}
			v_\mu(\varepsilon) < v_{\mu,\varepsilon} < v_\mu(\lambda_0+\varepsilon) < \rho & \text{ in } \Sigma_{\lambda_0},\\
			-\Delta v_{\mu,\varepsilon} = h(v_{\mu,\varepsilon}) > f(v_{\mu,\varepsilon}) & \text{ in } \Sigma_{\lambda_0},\\
			u \le v_{\mu,\varepsilon} & \text{ on } \partial\Sigma_{\lambda_0}.
		\end{cases}
		\]
		Now Proposition \ref{prop:wcp} implies $u \le v_{\mu,\varepsilon}$ in $\Sigma_{\lambda_0}$. Letting $\varepsilon\to0$, we have $u \le v_{\mu}$ in $\Sigma_{\lambda_0}$ and the conclusion follows from that fact that $\lim_{t\to0^+} v_{\mu}(t) = 0$.
	\end{proof}
	
	We prove some a priori estimates for solutions to \eqref{main} in what follows.	The next lemma improves the upper bound on $u$ near the boundary in Lemma \ref{lem:uniform_limit} when an explicit upper bound on $f$ is given.
	\begin{lemma}\label{lem:upper_bound}
		Assume that $f:(0,+\infty)$ is a locally Lipschitz continuous function such that \eqref{F} holds and $f(t) < \frac{c_0}{t^\gamma}$ for all $0<t<t_0$, where $c_0,t_0>0$ and $\gamma>1$. Let \( u \in C^2(\mathbb{R}^N_+) \cap C(\overline{\mathbb{R}^N_+}) \) be a solution to \eqref{main} with $u\in L^\infty(\Sigma_{\overline\lambda})$ for some $\overline\lambda>0$. Then
		\[
		u(x) \le C x_N^\frac{2}{\gamma+1} \quad\text{ in } \Sigma_{\lambda_0}
		\]
		for some constants $C, \lambda_0>0$.
	\end{lemma}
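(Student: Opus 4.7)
The plan is to construct an explicit super-solution modeled on the singular ODE $-w'' = c_0/w^\gamma$ and to dominate $u$ by it via the narrow-strip comparison principle (Proposition \ref{prop:wcp}). Concretely, I would set
\[
v(x) := A\, x_N^{2/(\gamma+1)},
\]
where $A > 0$ is to be chosen. A direct computation gives
\[
-\Delta v = \frac{2A(\gamma-1)}{(\gamma+1)^2}\, x_N^{-2\gamma/(\gamma+1)} = \frac{2A^{\gamma+1}(\gamma-1)}{(\gamma+1)^2}\cdot\frac{1}{v^\gamma}.
\]
Choosing $A$ large enough that $2A^{\gamma+1}(\gamma-1) \ge c_0(\gamma+1)^2$ then ensures $-\Delta v \ge c_0/v^\gamma > f(v)$ at every point where $0 < v < t_0$, so $v$ is a classical super-solution of $-\Delta v = f(v)$ on any strip where it stays below the singularity threshold $t_0$.

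Since $v$ vanishes on $\partial\mathbb{R}^N_+$, Proposition \ref{prop:wcp} does not apply to $v$ itself, so I would introduce the shifted barrier $v_\varepsilon(x) := A(x_N+\varepsilon)^{2/(\gamma+1)}$. It satisfies the same pointwise inequality, is strictly positive on $\{x_N=0\}$, and decreases to $v$ as $\varepsilon \to 0^+$.

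The next step is to calibrate the parameters. Using Lemma \ref{lem:uniform_limit}, I would first pick $\lambda_1 > 0$ so that $u < t_0/2$ throughout $\Sigma_{\lambda_1}$. I then fix $A$ satisfying the ODE bound above and so large that
\[
\lambda_0 := \tfrac{1}{2}(t_0/A)^{(\gamma+1)/2} \le \min(\lambda_1,\lambda^*),
\]
where $\lambda^*$ is the narrow-strip threshold from Proposition \ref{prop:wcp}. With this choice, $A\lambda_0^{2/(\gamma+1)} = t_0\cdot 2^{-2/(\gamma+1)}$, which strictly exceeds $t_0/2$ because $\gamma>1$ forces $2/(\gamma+1) < 1$. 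Hence for all sufficiently small $\varepsilon>0$ one simultaneously has $v_\varepsilon \le t_0$ throughout $\Sigma_{\lambda_0}$ (so the super-solution inequality $-\Delta v_\varepsilon \ge f(v_\varepsilon)$ is valid there) and $v_\varepsilon(\lambda_0) > t_0/2 \ge u$ on $\{x_N=\lambda_0\}$.

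Applying Proposition \ref{prop:wcp} then yields $u \le v_\varepsilon$ in $\Sigma_{\lambda_0}$ for each such $\varepsilon$, and letting $\varepsilon\to 0^+$ produces the desired bound $u(x) \le A\, x_N^{2/(\gamma+1)}$, which is the claim with $C := A$. The main obstacle is the simultaneous calibration described above: the super-solution must be large enough to beat $u$ on the top face (invoking Lemma \ref{lem:uniform_limit}) while remaining below $t_0$ inside the strip so that the pointwise hypothesis $f(t) < c_0/t^\gamma$ can be used; the dyadic choice of $\lambda_0$ reconciles these two competing requirements precisely because $\gamma>1$.
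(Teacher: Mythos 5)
Your proof is correct and follows essentially the same route as the paper's: the same barrier $A\,x_N^{2/(\gamma+1)}$ (the paper writes it as $\mu v(x_N)$ with the constant made explicit), the same $\varepsilon$-shift $v_\varepsilon(x)=A(x_N+\varepsilon)^{2/(\gamma+1)}$ to make the comparison function positive on $\partial\mathbb{R}^N_+$ so that Proposition \ref{prop:wcp} applies, and the same passage to the limit $\varepsilon\to0^+$. The only (harmless) difference is the calibration: the paper extends the bound $f(t)<c_1/t^\gamma$ from $(0,t_0)$ to $(0,\rho)$ with $\rho>\|u\|_{L^\infty(\Sigma_{\lambda^*})}$ by compactness and then arranges the barrier to exceed $\|u\|_{L^\infty(\Sigma_{\lambda^*})}$ on the top face, whereas you keep the barrier below $t_0$ and instead invoke Lemma \ref{lem:uniform_limit} to make $u$ small on the top face --- both calibrations are valid.
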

	\begin{proof}		
		Let $\lambda^*<\overline{\lambda}$ be defined as in Proposition \ref{prop:wcp} and choose some $\rho>\|u\|_{L^\infty(\Sigma_{\lambda^*})}$. By compactness, there exists $c_\ge c_0$ such that $f(t) < \frac{c_1}{t^\gamma}$ for all $0<t<\rho$.
		Let
		\[
		v(t) \equiv \frac{(\gamma+1)^\frac{2}{\gamma+1}}{(2\gamma-2)^\frac{1}{\gamma+1}} t^\frac{2}{\gamma+1},
		\]
		then $v_\mu(x) = \mu v(x_N)$ solves $-\Delta v_\mu = \frac{\mu^{\gamma+1}}{v_\mu^\gamma}$ in $\mathbb{R}^N_+$.
		By abuse of notation, we will write $v_\mu(x_N):=v_\mu(x',x_N)$. We choose $\mu$ large such that $\mu^{\gamma+1}>c_1$ and $v_\mu(\lambda^*) > \rho$. Then we choose $\lambda_0<\lambda^*$ satisfying $\|u\|_{L^\infty(\Sigma_{\lambda^*})} < v_\mu(\lambda_0) < \rho$.  Now we have $0<v_\mu<\rho$ in $\Sigma_{\lambda_0}$ and $-\Delta v_\mu \ge f(v_\mu)$ in $\Sigma_{\lambda_0}$.
		
		For small $\varepsilon>0$ such that $v_\mu(\lambda_0+\varepsilon)<\rho$, we define
		\[
		v_{\mu,\varepsilon}(x) = v_\mu(x + \varepsilon e_N).
		\]
		Then
		\[
		\begin{cases}
			-\Delta v_{\mu,\varepsilon} \ge f(v_{\mu,\varepsilon}) & \text{ in } \Sigma_{\lambda_0},\\			
			v_{\mu,\varepsilon} = v_\mu(\varepsilon) > 0 & \text{ on } \{x_N=0\},\\
			u < v_{\mu,\varepsilon} & \text{ on } \{x_N=\lambda_0\}.
		\end{cases}
		\]
		Now Proposition \ref{prop:wcp} implies $u \le v_{\mu,\varepsilon}$ in $\Sigma_{\lambda_0}$. Letting $\varepsilon\to0$ we conclude the proof.
	\end{proof}
	
	The next lemma concerns a lower bound on solutions.
	\begin{lemma}\label{lem:lower_bound}
		Assume that $f:(0,+\infty)$ is a locally Lipschitz continuous function and $f(t) > \frac{c_0}{t^\gamma}$ for all $0<t<t_0$, where $c_0,t_0>0$ and $\gamma\ge0$. Let \( u \in C^2(\mathbb{R}^N_+) \cap C(\overline{\mathbb{R}^N_+}) \) be a solution to \eqref{main}. Then
		\[
		u(x) \ge \min\{C x_N^\frac{2}{\gamma+1}, t_0\} \quad\text{ in } \mathbb{R}^N_+
		\]
		for some constant $C>0$ independent of $t_0$.
	\end{lemma}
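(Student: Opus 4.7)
The plan is to construct an explicit 1D sub-barrier behaving like $C x_N^{2/(\gamma+1)}$ near the boundary and plateauing just below $t_0$ away from it, and then to run a comparison argument in the style of Proposition~\ref{prop:wcp} to push $u$ from below.

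Set $\alpha := 2/(\gamma+1)$ and first verify that the formal candidate $\underline v(t) := C t^\alpha$ is a subsolution: a direct computation gives
\[
-\underline v''(t) = C\alpha(1-\alpha)\,t^{-\alpha\gamma}, \qquad \frac{c_0}{\underline v(t)^\gamma} = c_0 C^{-\gamma}\,t^{-\alpha\gamma}.
\]
For $\gamma > 1$ one has $\alpha(1-\alpha) > 0$, and I would choose $C$ so that $C^{\gamma+1}\alpha(1-\alpha) \le c_0$; for $\gamma = 1$ the function $\underline v$ is linear so $-\underline v'' = 0$; for $0 \le \gamma < 1$ the function is convex so $-\underline v'' \le 0$. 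In every case $C = C(c_0,\gamma) > 0$ is independent of $t_0$ and $-\underline v'' \le c_0/\underline v^\gamma$ holds on $(0,\infty)$. Putting $\lambda_0 := (t_0/C)^{1/\alpha}$, we have $\underline v(\lambda_0) = t_0$, and the conclusion reduces to $u \ge \min\{\underline v(x_N), t_0\}$ in $\mathbb{R}^N_+$.

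Because the natural barrier $\min\{\underline v, t_0\}$ has a concave corner at $\{x_N = \lambda_0\}$ whose distributional Laplacian contains a negative Dirac, I would compare instead with a smooth sub-barrier $V_\eta$ constructed from the ODE $-V'' = c_0/V^\gamma$ with $V(0) = 0$, integrated up to a plateau of height $t_0 - \eta$ attained with $V' = 0$ at some $T_\eta$, and then extended by $V_\eta \equiv t_0 - \eta$ on $[T_\eta,\infty)$. The first integral $(V')^2 = \tfrac{2c_0}{\gamma-1}(V^{1-\gamma} - (t_0-\eta)^{1-\gamma})$ makes this explicit for $\gamma > 1$; analogous ODE constructions work in the other cases. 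Such $V_\eta$ is a genuine $C^1$ classical subsolution of $-\Delta V \le f(V)$ on $\mathbb{R}^N_+$ with the right near-zero asymptotic $V_\eta(t) \sim C t^\alpha$, and $V_\eta \nearrow \min\{\underline v, t_0\}$ as $\eta \to 0^+$.

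Arguing by contradiction, suppose $V_\eta > u$ on a nonempty open set $D \subset \mathbb{R}^N_+$. On $D$ we have $u < V_\eta < t_0$, so the hypothesis gives $-\Delta u = f(u) > c_0/u^\gamma$, and the strict monotonicity of $s \mapsto 1/s^\gamma$ combined with the subsolution property of $V_\eta$ yields the strict classical inequality
\[
-\Delta(V_\eta - u) \le \frac{c_0}{V_\eta^\gamma} - \frac{c_0}{u^\gamma} < 0 \quad\text{in } D.
\]
The nonnegative function $(V_\eta - u)^+$ vanishes on $\partial \mathbb{R}^N_+$ (both $V_\eta$ and $u$ vanish at $x_N = 0$) and on the interior part of $\partial D$. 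Following the cutoff-and-energy scheme of Proposition~\ref{prop:wcp}, I would test this inequality against $(V_\eta - u)^+ \varphi_R^2(x')$, apply Young's inequality together with the Poincaré inequality in the bounded $x_N$-direction on a strip $\Sigma_\mu$ containing the non-plateau part of $V_\eta$, and iterate as $R \to \infty$ to force $(V_\eta - u)^+ \equiv 0$ on $\Sigma_\mu$. Above the strip the remaining function $(t_0 - \eta - u)^+$ is a bounded subharmonic function vanishing on $\{x_N = \mu\}$, to which a Phragmén-Lindelöf-type argument applies. Sending $\eta \to 0^+$ finishes the proof.

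The main obstacle I anticipate is twofold: the careful construction of the smoothed plateau barrier $V_\eta$ (particularly in the delicate cases $\gamma \le 1$ where the natural ODE does not produce the same asymptotic constant as $\underline v$), and the closure of the cutoff energy estimate uniformly in $\eta$ as the strip $\Sigma_\mu$ must stretch to contain $T_\eta$; keeping the Poincaré constant in balance with the strictness of the gap $c_0/u^\gamma - c_0/V_\eta^\gamma$ is where the bookkeeping becomes subtle.
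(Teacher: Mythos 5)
Your overall strategy (a one-dimensional sub-barrier $\sim C x_N^{2/(\gamma+1)}$ glued to a plateau below $t_0$, pushed under $u$ by comparison) is genuinely different from the paper's, which never uses a global barrier at all: it builds a compactly supported subsolution $w=\beta\phi_1^{2/(\gamma+1)}$ from the first Dirichlet eigenfunction of a ball, rescales it to balls $B_R(x_0)\subset\mathbb{R}^N_+$, and tests with $(w_{x_0,R}-u)^+\chi_{B_R(x_0)}$; since the barrier vanishes on $\partial B_R(x_0)$ the test function is automatically compactly supported, so no cutoffs, no Poincar\'e inequality, and no maximum principle in unbounded domains are needed, and the scaling $R^{2/(\gamma+1)}w(\cdot/R)$ delivers the rate with $C=w(0)$ visibly independent of $t_0$. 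Your route can in principle work, but as written it has a concrete gap in the comparison step.

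The gap: when you test $-\Delta(V_\eta-u)<0$ against $(V_\eta-u)^+\varphi_R^2(x')$ restricted to the strip $\Sigma_\mu$, the integration by parts produces a boundary term on the artificial boundary $\{x_N=\mu\}$ involving $\partial_N(V_\eta-u)\,(V_\eta-u)^+$, and $(V_\eta-u)^+$ does \emph{not} vanish there --- that it vanishes is precisely what this step is supposed to prove, and you cannot sign $\partial_N u$ at this stage since monotonicity of $u$ is a downstream consequence of this lemma. Your subsequent step above the strip then presupposes that $(t_0-\eta-u)^+$ vanishes on $\{x_N=\mu\}$, so the two halves of the argument are circular. The clean repair within your framework is to skip the strip decomposition entirely: on each connected component $D$ of $\{V_\eta>u\}$ the function $w=V_\eta-u$ is bounded above, satisfies $-\Delta w-c(x)w\le0$ with $c(x)=c_0\frac{V_\eta^{-\gamma}-u^{-\gamma}}{V_\eta-u}\le0$, and vanishes on $\partial D$, so the Berestycki--Caffarelli--Nirenberg principle (Lemma~\ref{lem:wmp}, applicable since $\overline D\subset\overline{\mathbb{R}^N_+}$ misses a cone in $\{x_N<0\}$) gives $w\le0$ in one stroke. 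Separately, you correctly flag but do not resolve the $\gamma\le1$ case: there the ODE plateau barrier is asymptotically linear at $0$ with slope $\sim t_0^{(1-\gamma)/2}$, and converting that into the stated bound with $C$ independent of $t_0$ requires the concavity estimate $V_\eta(t)\ge (t_0-\eta)t/T_\eta$ together with $T_\eta\sim t_0^{(\gamma+1)/2}$; this bookkeeping is doable but is a real missing piece of the proof, whereas the paper's scaling argument handles all $\gamma\ge0$ uniformly.
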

	\begin{proof}
		Let $\lambda_1>0$ and $\phi_1 \in C^2(\overline{B_1(0)})$ be the first eigenvalue and a corresponding positive eigenfunction of the Laplacian in $B_1(0)$, namely,
		\[
		\begin{cases}
			-\Delta \phi_1 = \lambda_1 \phi_1 & \text{ in } B_1(0),\\
			\phi_1 > 0 & \text{ in } B_1(0),\\
			\phi_1 = 0 & \text{ on } \partial B_1(0).
		\end{cases}
		\]
		
		Setting
		\[
		w = \beta \phi_1^\frac{2}{\gamma+1},
		\]
		where $\beta>0$ will be chosen later. Direct calculation yields that
		\[
		-\Delta w = \frac{\alpha(x)}{w^\gamma} \quad \text{ in } B_1(0),
		\]
		where
		\[
		\alpha(x) := \frac{2 \beta^{\gamma+1}}{\gamma+1}\left(\frac{\gamma-1}{\gamma+1}|\nabla\phi_1(x)|^2 + \lambda_1\phi_1^2(x)\right).
		\]
		
		Now we fix $\beta>0$ such that $\sup_{x\in B_1(0)}\alpha(x) = c_0$ and hence
		\[
		-\Delta w \le \frac{c_0}{w^\gamma} \text{ in } B_1(0).
		\]
		
		Let $R_0>0$ be such that $R_0^\frac{2}{\gamma+1} w(0) = t_0$.		
		For any $0<R\le R_0$ and $x_0 = (x_0', x_{0,N})\in\mathbb{R}^N$ with $x_{0,N} \ge R + \varepsilon$, where $\varepsilon$ is sufficiently small, we set
		\[
		w_{x_0,R}(x) := R^\frac{2}{\gamma+1}w\left(\frac{x-x_0}{R}\right) \quad\text{ in } B_R(x_0).
		\]
		Then
		\[
		w_{x_0,R} \le t_0 \quad\text{ and }\quad -\Delta w_{x_0,R} \le \frac{c_0}{w_{x_0,R}^\gamma} \quad\text{ in } B_R(x_0).
		\]
		
		On the other hand, since $w_{x_0,R}=0 < u$ on $\partial B_R(x_0)$, we can use $(w_{x_0,R}-u)^+ \chi_{B_R(x_0)}$ as a test function in
		\[
		-\Delta u = f(u)
		\quad\text{ and }\quad
		-\Delta w_{x_0,R} \le \frac{c_0}{w_{x_0,R}^\gamma}
		\]
		to obtain
		\[
		\int_{B_R(x_0)} \left| \nabla (w_{x_0,R}-u)^+ \right|^2
		\le \int_{B_R(x_0)} \left(\frac{c_0}{w_{x_0,R}^\gamma} - f(u)\right) (w_{x_0,R}-u)^+.
		\]
		In $B_R(x_0)\cap\{w_{x_0,R} > u\}$ we have $f(u) \ge \frac{c_0}{u^\gamma}$. Hence
		\[
		\int_{B_R(x_0)} \left| \nabla (w_{x_0,R}-u)^+ \right|^2
		\le \int_{B_R(x_0)} \left(\frac{c_0}{w_{x_0,R}^\gamma} - \frac{c_0}{u^\gamma}\right) (w_{x_0,R}-u)^+  \le 0.
		\]
		
		This implies $u \ge w_{x_0,R}$ in $B_R(x_0)$ with $x_{0,N} \ge R + \varepsilon$. Since $\varepsilon>0$ is arbitrary, we deduce
		\[
		u \ge w_{x_0,R} \text{ in } B_R(x_0) \text{ for all } 0<R\le R_0 \text{ and all } x_0 \in \mathbb{R}^N \text{ with } x_{0,N} \ge R.
		\]
		
		In particular, if $x_{0,N} = R < R_0$, then
		\[
		u(x_0) \ge w_{x_0,R}(x_0) = w(0)R^\frac{2}{\gamma+1} = w(0) x_{0,N}^\frac{2}{\gamma+1}.
		\]
		If $x_{0,N} \ge R = R_0$, then
		\[
		u(x_0) \ge w_{x_0,R}(x_0) = w(0)R_0^\frac{2}{\gamma+1} = t_0.
		\]
		
		The conclusion follows from the fact that $x_0$ is chosen arbitrarily in $\mathbb{R}^N$.
	\end{proof}
	
	Under a weaker assumption on $f$, we can still obtain a lower bound of $u$, which is useful in many situations.
	\begin{lemma}\label{lem:positive}	
		Assume that $f:(0,+\infty)$ is a locally Lipschitz continuous function and $f(t) > c_0t$ for all $0<t<t_0$, where $c_0,t_0>0$. Let \( u \in C^2(\mathbb{R}^N_+) \cap C(\overline{\mathbb{R}^N_+}) \) be a solution to \eqref{main}. Then
		\[
		u(x) \ge \min\{C x_N, t_0\} \quad\text{ in } \mathbb{R}^N_+.
		\]
		for some constant $C>0$.
	\end{lemma}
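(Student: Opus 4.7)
I will mimic the scheme of Lemma \ref{lem:lower_bound}, replacing the singular subsolution $\beta\phi_1^{2/(\gamma+1)}$ by an eigenfunction profile appropriate for the linear bound $f(t)>c_0 t$. Let $\lambda_1>0$ and $\phi_1\in C^2(\overline{B_1(0)})$ be the first Dirichlet eigenvalue and a positive first eigenfunction of $-\Delta$ on $B_1(0)$, normalized so that $\phi_1(0)=1$. Setting $R_1:=\sqrt{\lambda_1/c_0}$, for each $y_0\in\mathbb{R}^N$ with $y_{0,N}\ge R_1$ the ball $B_{R_1}(y_0)$ lies in $\overline{\mathbb{R}^N_+}$, and I define
\[
w_{y_0}(x):=t_0\,\phi_1\!\left(\frac{x-y_0}{R_1}\right),\qquad x\in\overline{B_{R_1}(y_0)}.
\]
This satisfies $-\Delta w_{y_0}=c_0 w_{y_0}$ in $B_{R_1}(y_0)$, $0\le w_{y_0}\le t_0$ with $w_{y_0}(y_0)=t_0$, and $w_{y_0}\equiv 0$ on $\partial B_{R_1}(y_0)$.

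The heart of the argument is the comparison $u\ge w_{y_0}$ in $B_{R_1}(y_0)$. I would set $v:=w_{y_0}-u$ and $E:=\{v>0\}$; on $E$ one has $0<u<w_{y_0}\le t_0$, so $f(u)>c_0 u$ and hence
\[
-\Delta v=c_0 w_{y_0}-f(u)<c_0(w_{y_0}-u)=c_0 v \quad\text{in }E.
\]
Since $w_{y_0}=0\le u$ on $\partial B_{R_1}(y_0)$, the function $v$ vanishes on $\partial E$. Multiplying by $v$ and integrating by parts gives $\int_E|\nabla v|^2<c_0\int_E v^2$. But $u>0=w_{y_0}$ on $\partial B_{R_1}(y_0)\cap\mathbb{R}^N_+$, so $E\subsetneq B_{R_1}(y_0)$, whence $\lambda_1(E)>\lambda_1(B_{R_1}(y_0))=c_0$, and Poincar\'e's inequality yields $\int_E|\nabla v|^2\ge\lambda_1(E)\int_E v^2>c_0\int_E v^2$. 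The contradiction forces $E=\emptyset$, proving the comparison.

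The bound then follows by sliding the ball, as in Lemma \ref{lem:lower_bound}. For $x_{0,N}\ge R_1$ the choice $y_0=x_0$ gives $u(x_0)\ge w_{x_0}(x_0)=t_0$. For $0<x_{0,N}<R_1$ the choice $y_0=(x_0',R_1)$ makes the ball tangent to $\partial\mathbb{R}^N_+$ at $(x_0',0)$; Hopf's boundary lemma applied to $\phi_1$ on $B_1(0)$, together with smoothness and compactness, supplies a constant $c>0$ (indeed $c\le\phi_1(0)=1$) such that $\phi_1(z)\ge c(1-|z|)$ on $\overline{B_1(0)}$. Since $|x_0-y_0|/R_1=(R_1-x_{0,N})/R_1$, this gives $u(x_0)\ge t_0\cdot c\,x_{0,N}/R_1$. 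Setting $C:=ct_0/R_1$ merges the two ranges into $u(x)\ge\min\{Cx_N,t_0\}$.

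The delicate point is the critical radius: the choice $R_1=\sqrt{\lambda_1/c_0}$ makes $c_0$ exactly the first Dirichlet eigenvalue of $B_{R_1}$, so the strict inclusion $E\subsetneq B_{R_1}(y_0)$, ensured by $u>0$ on $\partial B_{R_1}(y_0)\cap\mathbb{R}^N_+$, is essential to close the Poincar\'e contradiction; without this strictness, the argument would only rule out nontrivial $E$ modulo a measure-zero set, but since $E$ is open this still suffices.
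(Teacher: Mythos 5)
Your proposal is correct in substance and uses the same subsolution as the paper (the first eigenfunction scaled to the critical radius $R_1=\sqrt{\lambda_1/c_0}$, slid over balls $B_{R_1}(y_0)$ with $y_{0,N}\ge R_1$, with the Hopf lemma for $\phi_1$ converting the comparison into the linear lower bound). The genuine difference is the mechanism for the single-ball comparison $u\ge w_{y_0}$: the paper runs a sweeping argument over the family $\{t\phi_R^{x_0}\}_{t\in(0,1]}$ and invokes the strong comparison principle at the extremal $t$, whereas you test $v^+=(w_{y_0}-u)^+$ against the equation and close via strict domain monotonicity of the first Dirichlet eigenvalue. Your route is arguably cleaner (no connectedness/extremal-parameter bookkeeping) and exploits nicely that $R_1$ is chosen exactly so that $\lambda_1(B_{R_1})=c_0$; the paper's sweeping argument avoids any eigenvalue-monotonicity input and any integrability of $\nabla u$.

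Two points need tightening. First, your justification of $\lambda_1(E)>c_0$ is slightly off: proper inclusion of an \emph{open} set $E\subsetneq B_{R_1}(y_0)$ does not by itself force a strict eigenvalue inequality (removing a capacity-zero set changes nothing), so ``since $E$ is open this still suffices'' is not the right reason. What does suffice: by continuity, $w_{y_0}<u$ on an open collar of $\partial B_{R_1}(y_0)\cap\mathbb{R}^N_+$ inside the ball, so $B_{R_1}(y_0)\setminus E$ contains a nonempty open set and in particular has positive measure; then any $L^2$-normalized minimizer for $E$, extended by zero, would be a first eigenfunction of the ball vanishing on a set of positive measure, which is impossible. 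Second, in the tangent case $y_{0,N}=R_1$ the function $u$ is only continuous at the tangent point $(x_0',0)$ and $\nabla u$ need not be square-integrable there, so the integration by parts defining $\int_E|\nabla v^+|^2$ is not justified as written. Do what the paper does: prove the comparison for $y_{0,N}>R_1$ (where $u\in C^2(\overline{B_{R_1}(y_0)})$) and obtain the tangent case by letting $y_{0,N}\downarrow R_1$ and using continuity of $u$ and $w$. With these two repairs the argument is complete.
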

	An similar result was obtained by Berestycki et al. \cite{MR1470317} for $C^2(\overline{\mathbb{R}^N_+})$ solutions and $f$ being locally Lipschitz continuous on $[0,+\infty)$. We provide a proof that works in our more general situation.
	\begin{proof}[Proof of Lemma \ref{lem:positive}]
		Let $\lambda_1>0$ and $\phi_1 \in C^2(\overline{B_1(0)})$ be the first eigenvalue and the corresponding positive eigenfunction of the Laplacian in $B_1(0)$ such that $\phi_1(0)=t_0$. We take $R=\sqrt{\frac{\lambda_1}{c_0}}$ and set $\phi_R(x) = \phi_1\left(\frac{x}{R}\right)$, then
		\[
		\begin{cases}
			-\Delta \phi_R = c_0 \phi_R \le f(\phi_R) & \text{ in } B_R(0),\\
			\phi_R > 0 & \text{ in } B_R(0),\\
			\phi_R = 0 & \text{ on } \partial B_R(0).
		\end{cases}
		\]
		Since $\phi_R$ is radially symmetric and by abuse of notion, we may write $\phi_R(x)=\phi_R(|x|)$.
		For each $x_0\in\mathbb{R}^N_+\setminus\Sigma_R$ we set
		\[
		\phi_R^{x_0}(x) = \phi_R(x - x_0) \quad\text{ for } x \in B_R(x_0).
		\]
		We will show that
		\begin{equation}\label{positive1}
			u \ge \phi_R^{x_0} \text{ in } B_R(x_0) \quad\text{ for every } x_0\in\mathbb{R}^N_+\setminus\Sigma_R.
		\end{equation}
		To this end, we let any $x_0:=(x_0', x_{0,N})\in\mathbb{R}^N_+\setminus\Sigma_R$.
		
		We only consider the case $x_{0,N}>R$ since the case $x_{0,N}=R$ can be derived by continuity. We define the set
		\[
		\Lambda := \{s\in(0,1] \mid u > t\phi_R^{x_0} \text{ in } B_R(x_0) \text{ for all } t\in(0,s) \}.
		\]
		Since $u$ is positive on compact set $\overline{B_R(x_0)}$, we have $\Lambda \ne \emptyset$. We denote $s_0 = \sup\Lambda$. To derive \eqref{positive1}, we have to show that $s_0=1$. Assume by contradiction that $s_0<1$. We set $\tilde{\phi}:=s_0\phi_R^{x_0}$. Then
		\[
		u \ge \tilde{\phi} \quad\text{ in } B_R(x_0)
		\]
		\begin{equation}\label{positive2}
			u(\hat x) = \tilde{\phi}(\hat x) \quad\text{ for some } \hat x \in B_R(x_0).
		\end{equation}
		From the boundary data of $\tilde{\phi}$, we can choose $\varepsilon>0$ small such that
		\begin{equation}\label{positive3}
			\hat x \in B_{R-\varepsilon}(x_0)\quad\text{ and }\quad u > \tilde{\phi} \quad\text{ on } \partial B_{R-\varepsilon}(x_0).
		\end{equation}
		Since $u$ and $\tilde{\phi}$ are positive in the set $\overline{B_{R-\varepsilon}(x_0)}$ and $f$ is locally Lipschitz continuous in $(0,+\infty)$, the strong comparison principle can be applied in $B_{R-\varepsilon}(x_0)$ to yield either $\tilde\phi < u$ in $B_{R-\varepsilon}(x_0)$ or $\tilde\phi = u$ in $B_{R-\varepsilon}(x_0)$. However, the former contradicts \eqref{positive2}, while the latter contradicts \eqref{positive3}.	
		Hence \eqref{positive1} holds. This implies
		\[
		u(x) \ge \begin{cases}
			\phi_R(R-x_N) &\text{ if } x_N< R,\\
			\phi_R(0) &\text{ if } x_N\ge R.
		\end{cases}
		\]
		The conclusion follows immediately from the fact that $\phi_R'(R)<0$ and $\phi_R(0)=t_0$.
	\end{proof}
	
	\section{Qualitative properties of solutions}\label{sect3}
	
	\subsection{Monotonicity of solutions}	
	As in the previous works, the main tool we use in proving the monotonicity of solutions is the method of moving planes, which was introduced by Alexandrov \cite{MR143162} in the context of differential geometry and by Serrin \cite{MR333220} in the PDE framework, for an overdetermined problem.
	We recall some familiar notions related to this method.
	For each $\lambda>0$, we denote
	\[
	x_\lambda = (x_1, x_2,\dots,2\lambda-x_n),
	\]
	which is the reflection of $x$ through the hyperplane $\partial\Sigma_\lambda$.
	Let $u$ be a solution to \eqref{main}. We set
	\[
	u_\lambda(x) = u(x_\lambda),
	\]
	then $u_\lambda$ satisfies $-\Delta u_\lambda = f(u_\lambda)$ in $\Sigma_{2\lambda}$.
	
	We are ready to prove the main results in this section.
	\begin{proof}[Proof of Theorem \ref{th:monotonicity}]
		Applying Proposition \ref{prop:wcp} with $v\equiv u_\lambda$, we find $\lambda^*>0$ such that $u\le u_\lambda$ in $\Sigma_\lambda$ for all $\lambda\le\lambda^*$. Hence the set
		\[
		\Lambda = \{\lambda\in(0,+\infty) \mid u\le u_\mu \text{ in } \Sigma_\mu \text{ for all } \mu\in(0,\lambda]\}
		\]
		is not empty.
		Therefore, we can define
		\[
		\lambda_0 = \sup \Lambda.
		\]	
		We show that $\lambda_0=+\infty$.
		
		By contradiction, we assume that $\lambda_0<+\infty$. By continuity, we know that
		\begin{equation}\label{monotonicity1}
			u \le u_{\lambda_0} \quad\text{ in } \Sigma_{\lambda_0}.
		\end{equation}
		By Lemmas \ref{lem:uniform_limit} and \ref{lem:positive}, there exist $\tilde\lambda,\tilde\delta>0$ sufficiently small such that
		\begin{equation}\label{monotonicity2}
			u + \tilde\delta < u_\lambda \text{ in } \Sigma_{\tilde\lambda} \quad\text{ for all } \lambda\ge\lambda_0.
		\end{equation}
		We will reach a contradiction by showing that for some small $\varepsilon_0>0$,
		\[
		u \le u_\lambda \text{ in } \Sigma_\lambda \quad\text{ for all } \lambda\in[\lambda_0,\lambda_0+\varepsilon_0].
		\]
		
		If this is not true, then there exist $\lambda_n\searrow\lambda_0$ and $x_n:=(x_n',x_{n,N})\in\Sigma_{\lambda_n} \setminus \Sigma_{\tilde\lambda}$ such that
		\begin{equation}\label{monotonicity3}
			u(x_n) > u_{\lambda_n}(x_n).
		\end{equation}
		Up to a subsequence, we may assume that $x_{n,N}\to y_0\in[\tilde\lambda,\lambda_0]$ as $n\to\infty$.
		Now we set
		\[
		u_n(x',x_N) := u(x'+x_n',x_N).
		\]
		
		By Lemma \ref{lem:positive}, we know that
		\[
		\min\{C \tilde\lambda, t_0\} \le u_n \le \|u\|_{L^\infty(\Sigma_{\lambda})} \quad\text{ in } \Sigma_\lambda\setminus \Sigma_{\tilde\lambda} \text{ for } \lambda>0.
		\]
		Hence $f(u_n)$ are also bounded on each strip $\Sigma_\lambda\setminus \Sigma_{\tilde\lambda}$.
		By standard regularity, Ascoli-Arzel\`{a}'s theorem and a diagonal process, we deduce that
		\[
		u_n \to v \quad\text{ in } C^2_{\rm loc}(\mathbb{R}^N_+\setminus \Sigma_{\tilde\lambda})
		\]
		up to a subsequence, where $v$ weakly solves $-\Delta u = f(u)$ in $\mathbb{R}^N_+\setminus \overline{\Sigma_{\tilde\lambda}}$.
		Moreover, \eqref{monotonicity1}, \eqref{monotonicity3} imply $v \le v_{\lambda_0}$ in $\Sigma_{\lambda_0}\setminus \Sigma_{\tilde\lambda}$ and $v(0',y_0) \ge v_{\lambda_0}(0',y_0)$. Hence
		\[
		v(0',y_0) = v_{\lambda_0}(0',y_0).
		\]
		
		Therefore,
		\begin{equation}\label{monotonicity4}
			-\Delta(v_{\lambda_0}-v) + C(v_{\lambda_0}-v) = f(v_{\lambda_0})-f(v) + C(v_{\lambda_0}-v) \ge 0
		\end{equation}
		in any compact set of $\{\tilde\lambda<x_N\le\lambda_0\}$ with sufficiently large $C$.
		By the strong maximum principle, we deduce $v < v_{\lambda_0}$ in $\Sigma_{\lambda_0}\setminus \overline{\Sigma_{\tilde\lambda}}$. (The case $v \equiv v_{\lambda_0}$ in $\Sigma_{\lambda_0}$ cannot happen due to $v < v_{\lambda_0}$ on $\{x_N=\tilde\lambda\}$ deduced from \eqref{monotonicity2}.) This implies $y_0=\lambda_0$. By the mean value theorem, there exists $\xi_n\in(x_{n,N},2\lambda_n-x_{n,N})$ such that
		\[
		\frac{\partial u_n}{\partial x_N}(0',\xi_n) = \frac{u_n(0',2\lambda_n-x_{n,N}) - u_n(0',x_{n,N})}{2\lambda_n-2x_{n,N}} = \frac{u_{\lambda_n}(x_n) - u(x_n)}{2\lambda_n-2x_{n,N}} < 0.
		\]
		
		Letting $n\to\infty$, we obtain
		\[
		\frac{\partial v}{\partial x_N}(0',\lambda_0) \le 0.
		\]
		Hence
		\[
		\frac{\partial (v_{\lambda_0} - v)}{\partial x_N}(0',\lambda_0) = -2\frac{\partial v}{\partial x_N}(0',\lambda_0) \ge 0.
		\]
		However, this contradicts the H\"opf lemma \cite{MR1814364} for \eqref{monotonicity4} in $\Sigma_{\lambda_0}\setminus \Sigma_{\tilde\lambda}$.
		
		Therefore, $\lambda_0=+\infty$. Hence $u \le u_\lambda$ in $\Sigma_\lambda$ for all $\lambda>0$. Exploiting the strong maximum principle and the H\"opf lemma for $u_\lambda - u$ as above we deduce
		\[
		\frac{\partial u}{\partial x_N} > 0 \quad\text{ in } \mathbb{R}^N,
		\]
		which is what we have to prove.
	\end{proof}
	
	\begin{proof}[Proof of Theorem \ref{th:monotonicity2}]
		Since $g:[0,+\infty)$ is a locally Lipschitz continuous, there exist $t_0,c_1,c_2>0$ such that the function $f(t) = \frac{1}{t^\gamma} + g(u)$ is decreasing on $(0,t_0)$ and
		\[
		\frac{c_1}{t^\gamma}<f(t)<\frac{c_2}{t^\gamma}\quad\text{ in } (0,t_0).
		\]
		Hence Lemmas \ref{lem:upper_bound} and \ref{lem:lower_bound} imply the existence of $\lambda_0>0$ such that
		\begin{equation}\label{bounds}
			c x_N^\frac{2}{\gamma+1} \le u(x) \le C x_N^\frac{2}{\gamma+1} \quad\text{ in } \Sigma_{\lambda_0}.
		\end{equation}
		
		The monotonicity of $u$ follows from Theorem \ref{th:monotonicity}. So we only prove \eqref{gradient_blowup}. Our proof is motivated by an idea from \cite{MR3459013}.
		
		Let any $A>a>0$ and a positive sequence $(\varepsilon_n)$ such that $\varepsilon_n\to0$ as $n\to\infty$. We define
		\[
		w_n(x) := \varepsilon_n^{-\frac{2}{\gamma+1}} u(\varepsilon_n x) \quad\text{ for } x \in \mathbb{R}^N_+.
		\]
		For $n$ sufficiently large, we deduce from \eqref{bounds}
		\begin{equation}\label{wn_bound}
			c a^\frac{2}{\gamma+1} \le w_n(x) \le C A^\frac{2}{\gamma+1} \quad\text{ in } \Sigma_A\setminus \Sigma_a
		\end{equation}
		and
		\begin{equation}\label{wn_blowup}
			w_n(x) \le C a^\frac{2}{\gamma+1} \quad\text{ on } \{x_N=a\}.
		\end{equation}
		Moreover, $w_n$ solves
		\begin{equation}\label{wn}
			-\Delta w_n = \frac{1}{w_n^\gamma} + \varepsilon_n^\frac{2\gamma}{\gamma+1} g(\varepsilon_n^\frac{2}{\gamma+1} w_n) \quad \text{ in } \mathbb{R}^N_+.
		\end{equation}
		Since the right hand side of \eqref{wn} is uniformly bounded in $\Sigma_A\setminus\Sigma_a$ and by the standard regularity \cite{MR1814364}, $(w_n)$ is uniformly bounded in $C^{2,\alpha}(\overline{\Sigma_A\setminus \Sigma_a})$, for some $0 < \alpha < 1$. Since
		\[
		|\nabla w_n(x)| = \varepsilon_n^{\frac{\gamma-1}{\gamma+1}} |\nabla u(\varepsilon_n x)| \ge \varepsilon_n^{\frac{\gamma-1}{\gamma+1}} \frac{\partial u(\varepsilon_n x)}{\partial \eta},
		\]
		for $\varepsilon_n$ sufficiently small we get the estimate from above in \eqref{gradient_blowup}.
		
		Now we prove the estimate from below. Suppose by contradiction that there exist $\beta>0$, a sequence of normal vectors $\eta_n\in \mathbb{S}^{N-1}_+$ with $(\eta_n,e_N) \ge \beta$ and a sequence of points $x_n=(x_n', x_{n,N})\in\mathbb{R}^N_+$ such that
		\begin{equation}\label{cassump}
			x_{n,N}^\frac{\gamma-1}{\gamma+1} \frac{\partial u(x_n)}{\partial \eta_n} \to 0 \text{ and } x_{n,N} \to 0 \quad\text{ as } n\to\infty.
		\end{equation}
		Passing to a subsequence, we may assume $\eta_n\to\eta\in \mathbb{S}^{N-1}_+$ with $(\eta,e_N) \ge \beta$ as $n\to\infty$.
		We define $w_n$ as above with $\varepsilon_n=x_{n,N}$ and $\tilde w_n(x',x_N)=w_n(x'+\varepsilon_n^{-1}x'_n,x_N)$, namely,
		\[
		\tilde w_n(x) := x_{n,N}^{-\frac{2}{\gamma+1}} u(x_{n,N} x'+x_n', x_{n,N} x_N) \quad\text{ for } x \in \mathbb{R}^N_+.
		\]
		Then \eqref{wn_bound}, \eqref{wn_blowup} and \eqref{wn} still hold for $\tilde w_n$.
		Since $(\tilde w_n)$ is uniformly bounded in $C^{2,\alpha}(\overline{\Sigma_A\setminus \Sigma_a})$, up to a subsequence, we have
		\[
		\tilde w_n \to w_{a,A} \quad\text{ in } C^2_{\rm loc}(\overline{\Sigma_A\setminus \Sigma_a}).
		\]
		Moreover, passing \eqref{wn} to the limit, we get
		\[
		-\Delta w_{a,A} = \frac{1}{w_{a,A}^\gamma} \quad\text{ in } \Sigma_A\setminus \Sigma_a.
		\]
		Now we take $a = \frac{1}{j}$ and $A = j$, for large $j \in \mathbb{N}$ and we construct $w_{\frac{1}{j},j}$ as above. For $j\to\infty$, using a standard diagonal process, we can construct a limiting profile $w_\infty \in C^2_{\rm loc}(\mathbb{R}^N_+)$ so that
		\[
		-\Delta w_\infty = \frac{1}{w_\infty^\gamma} \quad\text{ in } \mathbb{R}^N_+
		\]
		and $w_{\frac{1}{j},j} = w_\infty$ in $\Sigma_j\setminus \Sigma_{\frac{1}{j}}$. Moreover, from \eqref{wn_blowup} we know that
		\[
		\lim_{x_N\to0^+} w_\infty(x) = 0 \quad\text{ uniformly in } x'\in\mathbb{R}^{N-1}.
		\]
		Hence $w_\infty$ is a solution to \eqref{pure}.
		By \cite[Theorem 1]{MR4753083}, $w_\infty$ depends only on $x_N$ and $w_\infty'>0$ in $\mathbb{R}_+$.
		
		On the other hand, \eqref{cassump} gives $\frac{\partial \tilde w_n(e_N)}{\partial \eta_n} = x_{n,N}^\frac{\gamma-1}{\gamma+1} \frac{\partial u(x_n)}{\partial \eta_n} \to 0$ as $n \to \infty$. This is a contradiction since $\frac{\partial \tilde w_n(e_N)}{\partial \eta_n} \to \frac{\partial w_\infty(e_N)}{\partial \eta}=w_\infty'(1)\eta_N>0$.
	\end{proof}
	
	\begin{remark}
		The proof indicates the following estimate which is stronger than the upper bound in \eqref{gradient_blowup}
		\[
		|\nabla u(x)| < c_2 x_N^\frac{1-\gamma}{\gamma+1} \quad\text{ in } \Sigma_{\lambda_1}
		\]
		for some $\lambda_1,c_2>0$ independent of $\beta$.
	\end{remark}
	
	\subsection{Rigidity of solutions}
	In this subsection, we prove Theorem \ref{th:rigidity}.
	We will make use of the following version of the maximum principle in unbounded domains which is due to Berestycki, Caffarelli and Nirenberg.
	\begin{lemma}[Lemma 2.1 in \cite{MR1470317}]\label{lem:wmp}
		Let $D$ be a domain (open connected set) in $\mathbb{R}^N$, possibly unbounded.
		Assume that $\overline D$ is disjoint from the closure of an infinite open connected
		cone $\Sigma$. Suppose there is a function $w \in C^2(D) \cap C(\overline D)$ that is bounded above
		and satisfies for some continuous function $c(x)$,
		\begin{align*}
			-\Delta w - c(x) w \le 0 &\text{ in } D \text{ with } c(x) \le 0,\\
			w \le 0 &\text{ on } \partial D.
		\end{align*}
		Then $w \le 0$ in $D$.
	\end{lemma}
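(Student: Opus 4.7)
The plan is a proof by contradiction combined with the construction of a positive supersolution that grows at infinity. Suppose $M := \sup_D w > 0$. Since $c(x) \le 0$, the inequality $-\Delta w - c(x) w \le 0$ has the shape of a subsolution inequality for a linear operator whose zeroth-order coefficient $-c$ is nonnegative, and the strong maximum principle forbids $M$ from being attained at any interior point of $D$: otherwise $w \equiv M$ in the connected set $D$, contradicting $w \le 0$ on $\partial D$ (which is non-empty, since $\overline D$ cannot be all of $\mathbb{R}^N$ as it is disjoint from the non-empty $\overline\Sigma$). Hence $M$ can only be approached along a sequence $x_n \in D$ with $|x_n| \to \infty$, and the task reduces to excluding this escape at infinity.

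The barrier exploits the cone geometry. After a translation the vertex of $\Sigma$ lies at the origin, so $\Omega := \Sigma \cap S^{N-1}$ is a non-empty open proper subset of $S^{N-1}$. Pick a strictly smaller open set $\widetilde\Omega \subset \Omega$ with $\overline{\widetilde\Omega} \subset \Omega$ (for instance an $\epsilon$-contraction in the spherical metric), and let $\widetilde\Sigma := \{r\omega : r > 0,\ \omega \in \widetilde\Omega\}$. Let $\lambda_1 > 0$ and $\widetilde\phi > 0$ denote the first Dirichlet eigenvalue and eigenfunction of $-\Delta_{S^{N-1}}$ on $S^{N-1} \setminus \overline{\widetilde\Omega}$, and set $\alpha := \tfrac{1}{2}\bigl(-(N-2) + \sqrt{(N-2)^2 + 4\lambda_1}\bigr) > 0$. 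A direct computation in spherical coordinates shows that $h(x) := |x|^\alpha\, \widetilde\phi(x/|x|)$ is harmonic and strictly positive on $\mathbb{R}^N \setminus \overline{\widetilde\Sigma} \supset \overline{D}$, so $-\Delta h - c(x) h = -c(x) h \ge 0$ in $D$ and $h$ is a positive supersolution. Moreover, the angular projection of $\overline D$ lies in the compact set $S^{N-1} \setminus \Omega$, which is strictly separated from $\partial\widetilde\Omega$ by construction, so $\widetilde\phi \ge c_0 > 0$ there and consequently $h(x) \ge c_0 |x|^\alpha$ on $\overline D$.

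The rest is a standard barrier argument. For each $\epsilon > 0$, set $w_\epsilon := w - \epsilon h$. Then $-\Delta w_\epsilon - c(x) w_\epsilon \le 0$ in $D$ and $w_\epsilon \le 0$ on $\partial D$, since $w \le 0$ and $h > 0$ there. Because $w \le M$ while $h(x) \ge c_0 |x|^\alpha \to \infty$ on $\overline D$ as $|x| \to \infty$, one finds $R_\epsilon > 0$ so that $w_\epsilon < 0$ on $D \cap \partial B_R$ for all $R \ge R_\epsilon$. Applying the weak maximum principle to $w_\epsilon$ on the bounded domain $D \cap B_R$ yields $w_\epsilon \le 0$ throughout $D \cap B_R$; letting $R \to \infty$ and then $\epsilon \to 0^+$ gives $w \le 0$ in $D$, contradicting $M > 0$.

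The main obstacle I foresee is precisely the uniform lower bound on $h$ at infinity, because a naive choice using the eigenfunction of $S^{N-1} \setminus \overline\Omega$ itself can vanish asymptotically if $x/|x|$ for $x \in D$ approaches $\partial\Omega$; note that the lemma does not assume positive distance between $\overline D$ and $\overline\Sigma$. The contraction trick $\widetilde\Omega \subsetneq \Omega$ circumvents this because the compactness of $S^{N-1} \setminus \Omega$ together with its positive distance from $\partial\widetilde\Omega$ provide the uniform estimate $\widetilde\phi \ge c_0 > 0$ along the angular projection of $\overline D$.
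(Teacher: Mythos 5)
The paper does not prove this lemma at all---it is quoted verbatim from Berestycki--Caffarelli--Nirenberg \cite{MR1470317} and used as a black box---so the only question is whether your blind argument is sound, and it is: it is essentially the original BCN proof, with the shrunken cone $\widetilde\Sigma$, the homogeneous harmonic barrier $|x|^{\alpha}\widetilde\phi(x/|x|)$ with $\alpha(\alpha+N-2)=\lambda_1>0$, and the exhaustion-by-balls weak maximum principle applied to $w-\epsilon h$. Your treatment of the one delicate point---that no positive distance between $\overline D$ and $\overline\Sigma$ is assumed, so the eigenfunction on $S^{N-1}\setminus\overline\Omega$ itself could degenerate along the angular projection of $D$---via the contraction $\overline{\widetilde\Omega}\subset\Omega$ and the resulting uniform bound $\widetilde\phi\ge c_0>0$ on the compact set $S^{N-1}\setminus\Omega$ is exactly the right fix (the opening strong-maximum-principle paragraph is harmless but redundant, since the barrier argument alone already yields $w\le0$).
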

	
	Lemma \ref{lem:wmp} allows us to derive a weak comparison principle.
	Notice that in the following result, we do not assume that $v$ is bounded from above.
	\begin{proposition}\label{prop:wcp2}
		Let $f:(0,+\infty)$ be a locally Lipschitz continuous function which is non-increasing on $(t_1,+\infty)$ for some $t_1>0$.
		Let \( u, v\in C^2(\Omega) \cap C(\overline{\Omega}) \)  be solutions to
		\[
		\begin{cases}
			-\Delta w = f(w) &\text{ in } \Omega,\\
			w>0 &\text{ in } \Omega,
		\end{cases}
		\]
		where $\Omega\subset\mathbb{R}^N$ is an open connected set such that $\mathbb{R}^N\setminus\overline\Omega$ contains an infinite open connected cone.		
		Assume that
		\[
		u \le v \text{ on } \partial\Omega, \quad v(x) \ge t_1 \text{ in } \Omega
		\]
		and
		\[
		\sup_\Omega (u-v) < +\infty.
		\]
		Then $u \le v$ in $\Omega$.
	\end{proposition}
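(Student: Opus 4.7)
The plan is to apply Lemma \ref{lem:wmp} to $w := u - v$ on each connected component of the open set
\[
D := \{x \in \Omega : u(x) > v(x)\},
\]
with the zero coefficient $c(x) \equiv 0$, and deduce that $D$ must be empty, which is equivalent to $u \le v$ in $\Omega$. Note that $D$ is open by continuity of $u$ and $v$, and $\overline{D} \subseteq \overline{\Omega}$.

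The key observation is that the assumption $v \ge t_1$ confines us, on $D$, to the region $u > v \ge t_1$ where $f$ is non-increasing. Since $f$ is continuous (being locally Lipschitz), the monotonicity extends to $[t_1,+\infty)$, so $f(u) \le f(v)$ throughout $D$, and hence
\[
-\Delta w = f(u) - f(v) \le 0 \quad \text{in } D,
\]
which is exactly the differential inequality of Lemma \ref{lem:wmp} with $c \equiv 0 \le 0$. The boundary inequality $w \le 0$ on $\partial D$ splits into two cases: on $\partial D \cap \Omega$ we have $u = v$ by continuity and the definition of $D$, so $w = 0$; on $\partial D \cap \partial\Omega$ the hypothesis $u \le v$ on $\partial\Omega$ gives $w \le 0$. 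The boundedness-from-above requirement of the lemma is precisely the assumption $\sup_\Omega(u-v) < +\infty$.

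One final check is the cone condition: each connected component of $D$ must have its closure disjoint from the closure of some infinite open connected cone. Because $\overline{D} \subseteq \overline{\Omega}$, it suffices to produce such a cone with closure contained in $\mathbb{R}^N \setminus \overline{\Omega}$. The hypothesis supplies an infinite open connected cone $\Sigma \subseteq \mathbb{R}^N \setminus \overline{\Omega}$; by slightly shrinking its aperture and, if necessary, translating its vertex along $\Sigma$ (so that the new vertex lies strictly inside $\Sigma$, hence off $\overline{\Omega}$), we obtain a subcone $\Sigma'$ with $\overline{\Sigma'} \subset \mathbb{R}^N \setminus \overline{\Omega}$. Applying Lemma \ref{lem:wmp} with this $\Sigma'$ on each connected component of $D$ yields $w \le 0$ on $D$, contradicting $w > 0$ on $D$ unless $D = \emptyset$. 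The only nonroutine ingredient is the role of the assumption $v \ge t_1$: it ensures that the comparison takes place entirely within the monotonic range of $f$, so no $c(x)$ from a local Lipschitz bound is needed; if one tried to drop $v \ge t_1$, the natural $c(x)$ arising from a Lipschitz estimate of $f$ would generally fail to be nonpositive, and Lemma \ref{lem:wmp} would no longer apply.
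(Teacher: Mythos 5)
Your proof is correct and takes essentially the same route as the paper: both apply Lemma \ref{lem:wmp} to $w=u-v$ on the connected components of $\{u>v\}$, using $v\ge t_1$ to place the comparison inside the monotone range of $f$. The only (immaterial) difference is that you take $c\equiv 0$ and the inequality $-\Delta w\le 0$, whereas the paper writes $c(x)$ as the difference quotient of $f$ and notes $c\le 0$; your extra care with the cone closure and with the endpoint $t_1$ is welcome but does not change the method.
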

	
	\begin{proof}
		Assume by contradiction that $u > v$ somewhere in $\Omega$. Let $D$ be a connected component of the set where $u > v$. Setting
		\[
		w=u-v,
		\]
		then
		\[
		\begin{cases}
			-\Delta w - c(x) w = 0 &\text{ in } D,\\
			w = 0 &\text{ on } \partial D,
		\end{cases}
		\]
		where
		\[
		c(x) = \frac{f(u(x))-f(v(x))}{u(x)-v(x)}.
		\]
		
		Moreover, since $t_1 \le v < u$ in $D$ and $f$ is non-increasing on $(t_1,+\infty)$, we deduce $c(x)\le0$ in $D$. Hence Lemma \ref{lem:wmp} applies to yield $w \le 0$ in $D$, a contradiction. Therefore, $u \le v$ in $\Omega$.
	\end{proof}
	
	We employ the technique from \cite[Proposition 5]{MR4753083} to show that solutions to problem \eqref{main} grow at most at a linear rate as $x_N \to +\infty$.
	\begin{lemma}\label{lem:upper_bound_infty}
		Under the assumptions of Theorem \ref{th:rigidity}, there exists a constant $C>0$ such that
		\[
		u(x) \le C x_N \quad\text{ in } \mathbb{R}^N_+\setminus\Sigma_{\overline{\lambda}}.
		\]
	\end{lemma}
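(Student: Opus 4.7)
The plan is to exhibit a one-dimensional barrier $V(x_N)=v_M(x_N)$ of linear growth and dominate $u$ by it via Proposition~\ref{prop:wcp2}. Assumption~(ii) together with $\gamma>1$ ensures that $F(s):=\int_s^{+\infty}f(t)\,dt$ is finite for $s>t_1$ and tends to $0$ as $s\to+\infty$. For every $M>0$ the implicit formula $\int_0^{v_M(t)}\frac{ds}{\sqrt{M+F(s)}}=\sqrt{2}\,t$ uniquely determines a strictly increasing solution $v_M\in C^2((0,+\infty))\cap C([0,+\infty))$ of $-v_M''=f(v_M)$ with $v_M(0)=0$. Differentiating, $v_M'(t)=\sqrt{2(M+F(v_M(t)))}$ is non-increasing in $t$ with limit $\sqrt{2M}$; hence $v_M(t)\le v_M(\overline\lambda)+v_M'(\overline\lambda)(t-\overline\lambda)$ for $t\ge\overline\lambda$, and $v_M(\overline\lambda)\to+\infty$ as $M\to+\infty$.

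I would then fix $M$ so that $v_M(\overline\lambda)\ge\max\{\|u\|_{L^\infty(\Sigma_{\overline\lambda})},t_1\}$ and set $V(x):=v_M(x_N)$, $\Omega:=\{x\in\mathbb{R}^N_+:x_N>\overline\lambda\}$. Then $V$ solves $-\Delta w=f(w)$, $w>0$, in $\Omega$, satisfies $V\ge t_1$ in $\Omega$, and $V\ge u$ on $\partial\Omega$ by continuity together with the $L^\infty$ hypothesis; moreover $\mathbb{R}^N\setminus\overline\Omega$ contains the infinite cone $\{x_N<0\}$, as required by Proposition~\ref{prop:wcp2}. Once the hypothesis $\sup_\Omega(u-V)<+\infty$ is verified, Proposition~\ref{prop:wcp2} yields $u\le V$ in $\Omega$, and the linear bound on $v_M$ recorded above gives $u(x)\le Cx_N$ for $x_N\ge\overline\lambda$, which is the desired conclusion.

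The hard part will be verifying that supremum bound, which is essentially of the same nature as the conclusion itself. My approach is a blow-up contradiction in the spirit of the proof of Theorem~\ref{th:monotonicity2}: if $\sup_\Omega(u-V)=+\infty$, pick a sequence $x_n\in\Omega$ with $\mu_n:=u(x_n)\to+\infty$ and perform the singular rescaling $\tilde u_n(y):=\mu_n^{-1}u(\mu_n^{(\gamma+1)/2}y+x_n)$, which is the natural scaling for the pure problem; by assumption~(ii) the contribution of the nonsingular part of $f$ becomes negligible after rescaling. Using the a priori estimates from Section~\ref{sect2}, I would extract a nontrivial $C^2_{\rm loc}$ limit solving the pure problem~\eqref{pure} on a suitable limit domain (either a half-space or all of $\mathbb{R}^N$, depending on the behavior of $x_{n,N}/\mu_n^{(\gamma+1)/2}$), and invoke the classification results \cite{MR4753083,2024arXiv240403343M} recalled in the introduction to force the limit to be one-dimensional of explicit form. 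The normalization $\tilde u_n(0)=1$ together with the boundary-layer structure inherited from Lemmas~\ref{lem:upper_bound}--\ref{lem:lower_bound} and the monotonicity of $u$ in $x_N$ should then contradict this rigid form. Pinning down the geometry of the limit domain and controlling the boundary layer is where I expect the technical heart of the argument to reside.
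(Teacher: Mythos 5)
Your reduction to Proposition~\ref{prop:wcp2} via the one-dimensional barrier $V=v_M(x_N)$ is sound as far as it goes (assumptions (i), (ii) and $\gamma>1$ do make $F$ finite on $(0,+\infty)$ with $F(0^+)=+\infty$, so $v_M$ exists and grows linearly), but, as you yourself note, it concentrates the entire difficulty into the hypothesis $\sup_\Omega(u-V)<+\infty$, which \emph{is} the linear growth bound up to constants. The blow-up argument you sketch for that hypothesis does not close the gap, for three concrete reasons. First, compactness: to extract a $C^2_{\rm loc}$ limit of $\tilde u_n$ you need the rescaled functions to be locally uniformly bounded, i.e.\ $\sup_{B_{r\mu_n^{(\gamma+1)/2}}(x_n)}u\le C\mu_n$; this is a Harnack/doubling estimate at scale comparable to $x_{n,N}$, and nothing in Section~\ref{sect2} provides any \emph{upper} bound on $u$ away from $\{x_N=0\}$ --- it is precisely the information you are trying to prove. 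Second, the limit equation: the rescaling gives $-\Delta\tilde u_n=\mu_n^{\gamma}f(\mu_n\tilde u_n)$, and (i)--(ii) only pinch $t^\gamma f(t)$ between $c_0$ and $c_1$ for large $t$; since $t^\gamma f(t)$ need not converge, the limit is not the pure problem \eqref{pure} and the classification of \cite{MR4753083,2024arXiv240403343M} does not apply. Third, even granting a rigid limit, no contradiction follows: under the negation one has $\mu_n\ge\sqrt{2M}\,x_{n,N}$, hence $d_n:=x_{n,N}\mu_n^{-(\gamma+1)/2}\to0$, so the rescaled boundary collapses onto the origin while the fixed strip $\Sigma_{\lambda_0}$ on which Lemmas~\ref{lem:upper_bound}--\ref{lem:lower_bound} control $u$ shrinks to a layer of thickness $\lambda_0\mu_n^{-(\gamma+1)/2}$; the ``boundary-layer structure'' is therefore lost in the limit and the normalization $\tilde u_n(0)=1$ contradicts nothing.

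The paper's route is different and avoids all of this: it combines the global lower bound $u\ge\tilde Cx_N^{2/(\gamma+1)}$ (Lemma~\ref{lem:lower_bound} with (i)) with the upper bound $f(t)\le c_2t^{-\gamma}$ from (ii) to show that the zero-order coefficient $R^2f(u)/u$ of the rescaled equation is uniformly bounded on balls $B_{x_{0,N}/2}(x_0)$; this yields a scale-invariant Harnack inequality $\sup_{B_R(x_0)}u\le C_H\inf_{B_R(x_0)}u$ with $R=x_{0,N}/4$ and $C_H$ independent of $x_0$, and chaining it down to the strip where $u$ is bounded, as in \cite[Proposition~5]{MR4753083}, gives the linear bound directly. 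If you wish to keep the barrier strategy, you must first establish such a Harnack-type estimate to control $\sup_\Omega(u-V)$ --- at which point Proposition~\ref{prop:wcp2} is no longer needed for this lemma.
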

	\begin{proof}	
		If $u$ is a solution to \eqref{main}, then
		\[
		v(x) := \left(\frac{\overline\lambda}{2}\right)^{-\frac{2}{\gamma+1}} u\left(\frac{\overline\lambda}{2}x\right)
		\]
		is bounded in $\Sigma_2$ and $v$ satisfies
		\[
		-\Delta v = \left(\frac{\overline\lambda}{2}\right)^\frac{2\gamma}{\gamma+1} f\left(\left(\frac{\overline\lambda}{2}\right)^\frac{2}{\gamma+1} v\right).
		\]
		Moreover, the function $g(t) := \left(\frac{\overline\lambda}{2}\right)^\frac{2\gamma}{\gamma+1} f\left(\left(\frac{\overline\lambda}{2}\right)^\frac{2}{\gamma+1} t\right)$ still satisfies (i) and (ii) in Theorem \ref{th:rigidity} with possible different parameters $c_0,c_1,t_1$.
		Therefore, without loss of generality, we may assume that our solution $u$ is bounded in the strip $\Sigma_2$.
		
		From (i) and  Lemma \ref{lem:lower_bound}, we have
		\begin{equation}\label{s1}
			u(x) \ge \tilde{C} x_N^\frac{2}{\gamma+1} \quad\text{ in } \mathbb{R}^N_+.
		\end{equation}	
		From (ii) and the compactness, there exists $c_2>0$ such that
		\begin{equation}\label{s2}
			f(t) < \frac{c_2}{t^\gamma} \quad\text{ for all } t\ge \tilde{C}.
		\end{equation}
		
		Let any $x_0 = (x_0', x_{0,N}) \in \mathbb{R}^N_+$ with $x_{0,N}:=4R>2$. We set
		\[
		u_R(x) := R^{-\frac{2}{\gamma+1}} u(x_0+R(x-x_0)),
		\]
		then $u_R>0$ in $B_4(x_0)$ and
		\[
		-\Delta u_R(x) = R^\frac{2\gamma}{\gamma+1} f(u(x_0+R(x-x_0))) = \frac{R^2 f(u(x_0+R(x-x_0)))}{u(x_0+R(x-x_0)} u_R(x).
		\]
		
		Remark that $x_0+R(x-x_0) \in \mathbb{R}^N_+\setminus\Sigma_{2R} \subset \mathbb{R}^N_+\setminus\Sigma_1$ for $x\in B_2(x_0)$.
		Hence \eqref{s1} and \eqref{s2} give
		\[
		\frac{R^2 f(u(x_0+R(x-x_0))}{u(x_0+R(x-x_0)} \le \frac{c_2 R^2}{u(x_0+R(x-x_0))^{\gamma+1}} \le \frac{c_2}{4 \tilde{C}^{\gamma+1}} \quad\text{ in } B_2(x_0).
		\]		
		By Harnack's inequality, we have
		\[
		\sup_{B_1(x_0)} u_R \le C_H \inf_{B_1(x_0)} u_R,
		\]
		which implies
		\[
		\sup_{B_R(x_0)} u \le C_H \inf_{B_R(x_0)} u,
		\]
		where $C_H>0$ is independent of $x_0$. From this point, we can proceed as in the proof of \cite[Proposition 5]{MR4753083} to get the thesis.
	\end{proof}
	
	Given the previous asymptotic bound on \( u \), we can apply the scaling technique as in \cite[Proposition 7]{MR4753083} to establish a bound on the gradient.
	\begin{lemma}\label{lem:gradient_bound}		
		Under the assumptions of Theorem \ref{th:rigidity}, there exists a constant $C>0$ such that
		\[
		|\nabla u(x)| \le C \quad\text{ in } \mathbb{R}^N_+\setminus\Sigma_{\overline{\lambda}}.
		\]
	\end{lemma}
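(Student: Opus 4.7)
I will adapt the scaling argument of \cite[Proposition 7]{MR4753083}, combined with interior elliptic regularity applied to a rescaled version of $u$. The key is that both the upper bound $u(x) \le C x_N$ from Lemma \ref{lem:upper_bound_infty} and the lower bound $u(x) \ge \tilde C x_N^{2/(\gamma+1)}$ coming from (i) and Lemma \ref{lem:lower_bound} are compatible with a rescaling on balls of radius proportional to $x_N$.

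Fix any $x_0 \in \mathbb{R}^N_+ \setminus \Sigma_{\overline\lambda}$, and first suppose $x_{0,N} \ge R_*$ for a threshold $R_* > 2\overline\lambda$ to be chosen. Set $R := x_{0,N}/4$ and introduce
\[
u_R(y) := R^{-1} u(x_0 + Ry), \qquad y \in \overline{B_1(0)}.
\]
A direct computation yields $-\Delta u_R(y) = R f(u(x_0 + Ry))$ together with the crucial identity $\nabla u_R(0) = \nabla u(x_0)$, so it suffices to bound $|\nabla u_R(0)|$ uniformly. For $y \in B_1(0)$ the point $x_0 + Ry$ has $N$-th coordinate in $[3R, 5R]$, so Lemma \ref{lem:upper_bound_infty} gives $u(x_0+Ry) \le 5CR$ and hence $u_R \le 5C$ on $B_1(0)$. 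From (i) and Lemma \ref{lem:lower_bound}, $u(x_0+Ry) \ge \tilde C (3R)^{2/(\gamma+1)}$, which exceeds $t_1$ once $R_*$ is large, so hypothesis (ii) gives
\[
0 \le -\Delta u_R(y) = R f(u(x_0+Ry)) \le \frac{c_1 R}{u(x_0+Ry)^\gamma} \le C' R^{(1-\gamma)/(\gamma+1)},
\]
which is uniformly bounded on $B_1(0)$ because $\gamma > 1$ (in fact it decays as $R \to \infty$). Since $u_R$ and $\Delta u_R$ are uniformly in $L^\infty(B_1(0))$, classical interior $W^{2,p}$ estimates followed by Sobolev embedding give $\|u_R\|_{C^{1,\alpha}(\overline{B_{1/2}(0)})} \le C$ independently of $x_0$, and therefore $|\nabla u(x_0)| = |\nabla u_R(0)| \le C$.

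For $x_0$ in the remaining bounded range $\overline\lambda < x_{0,N} \le R_*$, the ball $B_{\overline\lambda/2}(x_0)$ is contained in $\{x_N > \overline\lambda/2\} \subset \mathbb{R}^N_+$; on it $u$ is bounded above by Lemma \ref{lem:upper_bound_infty} and bounded below away from zero by Lemma \ref{lem:lower_bound}, so $f(u)$ is bounded, and classical interior gradient estimates close this case uniformly in $x_0' \in \mathbb{R}^{N-1}$.

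The main technical point is arranging that $R f(u(x_0+Ry))$ is uniformly bounded after rescaling: this requires precisely balancing the polynomial upper bound on $f$ from (ii) against the polynomial lower bound on $u$ from Lemma \ref{lem:lower_bound}, and crucially uses that the resulting exponent $(1-\gamma)/(\gamma+1)$ is negative, which is exactly the content of the assumption $\gamma > 1$. Once this is in place, the gradient bound follows from classical elliptic theory.
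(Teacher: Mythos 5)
Your proof is correct and follows essentially the same route as the paper's: rescale $u$ by a factor comparable to $x_{0,N}$, bound $u_R$ above via Lemma \ref{lem:upper_bound_infty} and bound $-\Delta u_R$ by playing the decay $f(t)\le c_1 t^{-\gamma}$ from (ii) against the lower bound $u\ge \tilde C x_N^{2/(\gamma+1)}$ (exactly the exponent bookkeeping $R^{(1-\gamma)/(\gamma+1)}$, using $\gamma>1$), then conclude by interior elliptic estimates. The paper avoids your case split by extending (ii) by compactness to $f(t)\le c_2 t^{-\gamma}$ for all $t\ge\tilde C$; in your intermediate band $\overline\lambda<x_{0,N}\le R_*$ note that the upper bound on $u$ over the part of $B_{\overline\lambda/2}(x_0)$ lying below $\{x_N=\overline\lambda\}$ comes from the standing assumption $u\in L^\infty(\Sigma_{\overline\lambda})$ rather than from Lemma \ref{lem:upper_bound_infty}.
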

	\begin{proof}
		As in Lemma \ref{lem:upper_bound_infty}, we may assume $\overline{\lambda}=2$.
		Let $x_0\in\mathbb{R}^N_+\setminus\Sigma_2$ and set $R=x_{0,N}\ge2$. We define
		\[
		u_R(x) := \frac{u(Rx)}{R} \quad\text{ in } B_{\frac{1}{2}}\left(\frac{x_0}{R}\right).
		\]
		By Lemma \ref{lem:upper_bound_infty} we have $u_R \le C$. Moreover, from \eqref{s1} and \eqref{s2}, we deduce
		\[
		-\Delta u_R = Rf(u(Rx)) \le \frac{c_2 R}{u(Rx)^\gamma} \le \frac{4^\frac{\gamma}{\gamma+1} c_2}{\tilde{C}^\gamma} R^{-\frac{\gamma-1}{\gamma+1}} \le \frac{2 c_2}{\tilde{C}^\gamma} \quad\text{ in } B_{\frac{1}{2}}\left(\frac{x_0}{R}\right).
		\]
		By the standard gradient estimate, we have $|\nabla u_R| \le C'$ in $B_{\frac{1}{4}}\left(\frac{x_0}{R}\right)$. This indicates $|\nabla u| \le C'$ in $B_{\frac{R}{4}}(x_0)$. The thesis follows from the arbitrariness of $x_0$.
	\end{proof}
	
	We are ready to prove Theorem \ref{th:rigidity} by employing the sliding method.
	
	\begin{proof}[Proof of Theorem \ref{th:rigidity}]
		For each $\lambda>0$ and $\nu\in \mathbb{S}^{N-1}_+$, we define
		\[
		u_\lambda^\nu (x) := u(x+\lambda\nu).
		\]
		
		We aim to show that
		\begin{equation}\label{nu_monotone}
			u \le u_\lambda^\nu \,\text{ in } \mathbb{R}^N_+ \quad\text{ for all } \lambda>0.
		\end{equation}
		By Lemma \ref{lem:lower_bound}, there exists $\lambda^*>0$ such that
		\[
		u(x)>t_1 \text{ for } x_N>\lambda^*.
		\]
		Let $\lambda>\lambda_\nu^*$, where $\lambda_\nu^*:=\frac{\lambda^*}{(\nu,e_N)}$, then $u_\lambda^\nu>t_1$ in $\mathbb{R}^N_+$. Moreover, from Lemmas \ref{lem:upper_bound_infty}, \ref{lem:gradient_bound} and the mean value theorem, we deduce
		\[
		\sup_{\mathbb{R}^N_+} (u-u_\lambda^\nu) < +\infty.
		\]
		Since
		\[
		\begin{cases}
			-\Delta u = f(u) &\text{ in } \mathbb{R}^N_+,\\
			-\Delta u_\lambda^\nu = f(u_\lambda^\nu) &\text{ in } \mathbb{R}^N_+,\\
			u>0 &\text{ in } \mathbb{R}^N_+,\\
			u_\lambda^\nu>t_1 &\text{ in } \mathbb{R}^N_+,\\
			u \le u_\lambda^\nu &\text{ on } \partial\mathbb{R}^N_+,
		\end{cases}
		\]
		we can apply Proposition \ref{prop:wcp2} to derive
		\[
		u \le u_\lambda^\nu \,\text{ in } \mathbb{R}^N_+ \quad\text{ for all } \lambda>\lambda_\nu^*.
		\]
		
		Now that the set
		\[
		\Lambda = \{ \lambda>0 \mid u \le u_\mu^\nu \text{ in } \mathbb{R}^N_+ \text{ for all } \mu > \lambda \}
		\]
		is nonempty, we can define $\lambda_0 = \inf \Lambda$.
		We will show that
		\[
		\lambda_0 = 0.
		\]
		
		Assume on contrary that $\lambda_0>0$. By continuity of $u$, we have $u \le u_{\lambda_0}^\nu$ in $\mathbb{R}^N_+$. In order to reach a contradiction, we will search for some $\varepsilon_0$ small such that
		\begin{equation}\label{contradiction}
			u \le u_\lambda^\nu \quad\text{ in } \mathbb{R}^N_+
		\end{equation}
		for all $\lambda \in (\lambda_0-\varepsilon_0, \lambda_0]$.
		
		$\circ$ Due to Lemmas \ref{lem:uniform_limit} and \ref{lem:lower_bound}, there exist $\tilde{\lambda},\tilde{\delta}>0$ sufficiently small such that
		\begin{equation}\label{small_strip}
			u+\tilde{\delta}\leq u_\lambda^\nu \quad\text{ in } \Sigma_{\tilde{\lambda}}
		\end{equation}
		for all $\lambda>\lambda_0/2$.
		
		$\circ$ We claim that
		\begin{equation}\label{middle_strip}
			u \le u_\lambda^\nu \quad\text{ in } \Sigma_{\lambda^*}\setminus\Sigma_{\tilde{\lambda}}
		\end{equation}
		for all $\lambda \in (\lambda_0-\varepsilon_0, \lambda_0)$, where $\varepsilon_0>0$ is sufficiently small.
		
		Assume that \eqref{middle_strip} does not hold. Then there exist two sequences $\lambda_n \nearrow \lambda_0$ and $x_n := (x'_n, x_{n,N}) \in \mathbb{R}^{N-1} \times [\tilde{\lambda}, \lambda^*)$ such that
		\begin{equation}\label{contra_sequence}
			u(x_n) > u_{\lambda_n}^\nu(x_n).
		\end{equation}
		Moreover, we may assume $x_{n,N} \to y_0 \in [\tilde{\lambda}, \lambda^*]$. Now we set
		\[
		u_n(x', x_N) = u(x'+x'_n, x_N).
		\]
		
		Since $C \tilde{\lambda}^\frac{2}{\gamma+1} \le u_n \le \|u\|_{L^\infty(\Sigma_\lambda)}$ in $\Sigma_\lambda\setminus\Sigma_{\tilde{\lambda}}$, we have that $f(u_n)$ is bounded in $\Sigma_\lambda\setminus\Sigma_{\tilde{\lambda}}$ for each $\lambda>\tilde{\lambda}$. The standard regularity gives $\|u_n\|_{C^{2,\alpha}(\overline{\Sigma_\lambda\setminus\Sigma_{\tilde{\lambda}}})} < C_\lambda$ for some $0<\alpha<1$. By the Arzel\`a--Ascoli theorem, via a standard diagonal process, we have
		\[
		u_n \to v \quad\text{ in } C^2_{\rm loc}(\overline{\mathbb{R}^N_+\setminus\Sigma_{\tilde{\lambda}}})
		\]
		up to a subsequence. Moreover, $v$ weakly solves $-\Delta v = f(v)$ in $\mathbb{R}^N_+\setminus\overline{\Sigma_{\tilde{\lambda}}}$.
		Using the definition of $\lambda_0$ and passing \eqref{contra_sequence} to the limit, we have
		\begin{align*}
			&v \le v_{\lambda_0}^\nu \quad\text{ in } \mathbb{R}^N_+\setminus\Sigma_{\tilde{\lambda}},\\
			&v(x_0) = v_{\lambda_0}^\nu(x_0),
		\end{align*}
		where $x_0 = (0',y_0)$. On the other hand, by \eqref{small_strip} we have $v + \tilde{\delta} \le v_{\lambda_0}^\nu$ on $\{x_N=\tilde{\lambda}\}$. Hence the strong comparison principle implies $v < v_{\lambda_0}^\nu$ in $\mathbb{R}^N_+\setminus\Sigma_{\tilde{\lambda}}$. This contradicts the fact that $v(x_0) = v_{\lambda_0}^\nu(x_0)$.		
		Therefore, \eqref{middle_strip} must hold.
		
		$\circ$ Next, we show that
		\begin{equation}\label{outer_strip}
			u \le u_\lambda^\nu \quad\text{ in } \mathbb{R}^N_+\setminus\Sigma_{\lambda^*}
		\end{equation}
		for all $\lambda \in (\lambda_0-\varepsilon_0, \lambda_0)$.
		
		From \eqref{middle_strip} and the continuity, we already have $u \le u_\lambda^\nu$ on $\{x_N=\lambda^*\}$. Moreover, $u_\lambda^\nu(x) \ge t_1$ for each $x\in\mathbb{R}^N_+\setminus\Sigma_{\lambda^*}$. Hence \eqref{outer_strip} follows by applying Proposition \ref{prop:wcp2} with $u$ and $v:=u_\lambda^\nu$ on $\mathbb{R}^N_+\setminus\Sigma_{\lambda^*}$.
		
		Combining \eqref{small_strip}, \eqref{middle_strip} and \eqref{outer_strip}, we obtain \eqref{contradiction}. This contradicts the definition of $\lambda_0$ and hence \eqref{nu_monotone} is proved.
		
		Therefore, $u$ is monotone increasing in direction $\nu$ for all $\nu\in \mathbb{S}^{N-1}_+$. That is,
		\[
		\frac{\partial u}{\partial \nu} := (\nabla u, \nu) \ge 0 \quad \text{ in } \mathbb{R}^N_+.
		\]	
		To deduce the one-dimensional symmetry of $u$, we take $\zeta$ be any direction in $\{x\in \partial B_1(0) \mid x_N=0\}$. Let $\nu_n\in \mathbb{S}^{N-1}_+$ be a sequence converging to $\zeta$, we have $\frac{\partial u}{\partial \nu_n} \ge 0$. By sending $n\to\infty$, we deduce $\frac{\partial u}{\partial\zeta} \ge 0$ in $\mathbb{R}^N_+$.
		Similarly, let another sequence $\tau_n\in \mathbb{S}^{N-1}_+$ converging to $-\zeta$, we obtain $\frac{\partial u}{\partial\zeta} \le 0$ in $\mathbb{R}^N_+$.	
		Therefore, $u$ is constant in direction $\zeta$. Since $\zeta$ is arbitrary, we deduce that $u$ does not depend on $x'$. Hence $u$ depends only on $x_N$ and monotone increasing in $x_N$.
		
		By the H\"opf lemma \cite{MR1814364}, we have actually $\frac{\partial u}{\partial x_N} > 0$ in $\mathbb{R}^N_+$. By writing $v(x_N)=u(x)$, problem \eqref{main} reduces to
		\[
		\begin{cases}
			-v'' = f(v) &\text{ in } \mathbb{R}_+,\\
			v'(t) > 0 &\text{ in } \mathbb{R}_+,\\
			v(0) = 0.
		\end{cases}
		\]
		Hence for every $t>0$, we have
		\begin{equation}\label{ode1}
			\frac{1}{2} (v')^2 - F(v) = M,
		\end{equation}
		which is a constant. Letting $t\to+\infty$ and noticing that $v(t)\to+\infty$ by Lemma \ref{lem:lower_bound}, we deduce $M\ge0$.
		By integrating \eqref{ode1} and using $v(0)=0$, this gives
		\begin{equation}\label{ode2}
			\int_{0}^{v(t)} \frac{ds}{\sqrt{M+F(s)}} = \sqrt{2} t \quad\text{ for all } t\ge0.
		\end{equation}
		
		Conversely, for every $M\ge0$ we have
		\[
		\int_{0}^{+\infty} \frac{ds}{\sqrt{M+F(s)}} = +\infty
		\text{ and }
		\int_{0}^{t} \frac{ds}{\sqrt{M+F(s)}} < +\infty \text{ for all } t>0.
		\]
		Therefore, for each $M\ge0$, formula \eqref{ode2} uniquely determines a function $v:=v_M$ which is a solution to \eqref{main}. It is also clear from \eqref{ode1} that each solution $v_M$ is characterized by the property $\lim_{t\to+\infty} v_M'(t) = 
		\sqrt{2M}$.
	\end{proof}
	
	Finally, we discuss the case that $f$ is nonincreasing in the whole domain $(0,+\infty)$.
	\begin{proof}[Proof of Theorem \ref{th:rigidity2}]
		The proof is almost the same as that of \cite[Theorem 6]{2024arXiv240403343M}, so we only comment on the difference. By employing the Kelvin transform
		\[
		\hat u(x) := \frac{1}{|x|^{N-2}} u\left(\frac{x}{|x|^2}\right),
		\]
		we deduce that \( \hat u \in C^2(\mathbb{R}^N_+) \cap C(\overline{\mathbb{R}^N_+}\setminus\{0\}) \) and $\hat u$ is a solution to
		\[
		\begin{cases}
			-\Delta \hat u = \dfrac{1}{|x|^{N+2}} f(|x|^{N-2} \hat u) &\text{ in } \mathbb{R}^N_+,\\
			\hat u>0 &\text{ in } \mathbb{R}^N_+,\\
			\hat u=0 &\text{ on } \partial\mathbb{R}^N_+\setminus\{0\}.
		\end{cases}
		\]
		As in \cite{2024arXiv240403343M} we denote $\Sigma_\lambda=\{(x_1,x')\in\mathbb{R}^N_+ \mid x_1<\lambda\}$, $x_\lambda=(2\lambda-x_1,x')$ and $\hat u_\lambda(x) = \hat u(x_\lambda)$. Then for test functions of type $w=(\hat u - \hat u_\lambda - \tau)^+\psi \chi_{\Sigma_\lambda}$ with compact support in $\Sigma_\lambda$ and $\tau>0$ we have
		\[
		\int_{\Sigma_\lambda} (\nabla(\hat u-\hat u_\lambda), \nabla w) = \int_{\Sigma_\lambda} \left(\frac{1}{|x|^{N+2}} f(|x|^{N-2} \hat u)-\frac{1}{|x_\lambda|^{N+2}} f(|x_\lambda|^{N-2} \hat u_\lambda)\right) w \le 0
		\]
		since $\hat u \ge \hat u_\lambda$ on the support of $w$ and $|x|\ge|x_\lambda|$ in $\Sigma_\lambda$. From this inequality, we can argue as in the proof of \cite[Theorem 6]{2024arXiv240403343M} and repeat the arguments there to get $u(x)=v(x_N)$, where $v$ is a solution to
		\[
		\begin{cases}
			-v'' = f(v) &\text{ in } \mathbb{R}_+,\\
			v(t) > 0 &\text{ in } \mathbb{R}_+,\\
			v(0) = 0.
		\end{cases}
		\]
		By Theorem \ref{th:monotonicity} we have $v'(t) > 0$ in $\mathbb{R}_+$. Moreover, since $-v''\ge0$ we have that $v'$ is nondecreasing and hence $\lim_{t\to+\infty} v(t)=+\infty$. From $-v'' = f(v)$ we deduce
		\[
		\frac{1}{2} (v')^2 + F_1(v) = M_1,
		\]
		where $F_1(s) = \int_1^s f(t) dt$ and $M_1$ is a constant. Sending $t\to+\infty$, we obtain $M_1\ge \int_1^{+\infty} f(t) dt$. In particular, $\int_1^{+\infty} f(t) dt$ is finite. Hence also $F(s) = \int_{s}^{+\infty} f(t) dt$ is finite for all $s>0$. Similar to the proof of Theorem \ref{th:rigidity} we deduce
		\begin{equation}\label{ode3}
			\int_{0}^{v(t)} \frac{ds}{\sqrt{M+F(s)}} = \sqrt{2} t \quad\text{ for all } t\ge0 \text{ and some } M\ge0
		\end{equation}
		and \eqref{ode3} indeed provides a solution to our problem.
	\end{proof}

	\noindent\textbf{Conflict of interest} The author declares no conflict of interest.
	
	\noindent\textbf{Data Availability} Data sharing not applicable to this article as no datasets were generated or analysed during the current study.
	
	%	\section*{Acknowledgments}
	%	This research is funded by Vietnam National Foundation for Science and Technology Development (NAFOSTED) under grant number 101.02-2023.35.
	
	\bibliographystyle{abbrvurl}
	\bibliography{../../../references}
	
\end{document}